\theoremstyle{theorem}
\newtheorem{thm}{Theorem}[section]
\newtheorem{lem}[thm]{Lemma}
\newtheorem{prp}[thm]{Proposition}
\theoremstyle{definition}
\newtheorem{dfn}[thm]{Definition}
\newtheorem{rmk}[thm]{Remark}
\newtheorem*{rmk*}{Remark}
\newcommand{\CC}{\mathbb{C}}
\newcommand{\NN}{\mathbb{N}}
\newcommand{\TT}{\mathbb{T}}
\newcommand{\ZZ}{\mathbb{Z}}
\newcommand{\Bb}{\mathcal{B}}
\newcommand{\Cc}{\mathcal{C}}
\newcommand{\Hh}{\mathcal{H}}
\newcommand{\Kk}{\mathcal{K}}
\newcommand{\Ii}{\mathcal{I}}
\newcommand{\Tt}{\mathcal{T}}
\newcommand{\clsp}{\overline{\operatorname{span}}}
\newcommand{\Per}{\operatorname{Per}}
\newcommand{\Prim}{\operatorname{Prim}}
\newcommand{\spec}{\operatorname{spec}}
\newcommand{\CH}{\Cc(H)}
\title[Ideals of graph algebras]{On Hong and Szyma\'nski's description of the primitive-ideal space of a graph algebra}
\author{Toke Meier Carlsen}
\address[T.M. Carlsen]{Department of Science and Technology\\University of the Faroe Islands\\
N\'oat\'un 3\\ FO-100 T\'orshavn\\the Faroe Islands}
\email{toke.carlsen@gmail.com}
\author{Aidan Sims}
\address[A. Sims]{School of Mathematics and Applied Statistics\\
University of Wollongong\\
NSW 2522\\
Australia}
\email{asims@uow.edu.au}
\date{\today}
\subjclass[2010]{46L05}
\keywords{Graph algebra; Cuntz--Krieger algebra; primitive ideal; irreducible representation; ideal lattice}
\thanks{This research was supported by the Australian Research Council. Much of the work was completed between
breathtaking hikes in the Faroe Islands; Aidan thanks Toke for his generous hospitality.
Aidan also deeply thanks the organising committee of the 2015 Abel Symposium.}
\begin{document}

\begin{abstract}
In 2004, Hong and Szyma\'nski produced a complete description of the primitive-ideal
space of the $C^*$-algebra of a directed graph. This article details a slightly different
approach, in the simpler context of row-finite graphs with no sources, obtaining an
explicit description of the ideal lattice of a graph algebra.
\end{abstract}

\maketitle

\section{Introduction}

The purpose of this paper is to present a new exposition, in a somewhat simpler setting,
of Hong and Szyma\'nski's description of the primitive-ideal space of a graph
$C^*$-algebra. Their analysis \cite{HS2004} relates the primitive ideals of $C^*(E)$ to
the maximal tails $T$ of $E$---subsets of the vertex set satisfying three elementary
combinatorial conditions (see page~\pageref{pg:tail}). In previous work with Bates and
Raeburn, Hong and Szyma\'nski had already studied the primitive ideals of $C^*(E)$ that
are invariant for its gauge action. Specifically, \cite[Theorem~4.7]{BHRS} shows that the
gauge-invariant primitive ideals of $C^*(E)$ come in two flavours: those indexed by
maximal tails in which every cycle has an entrance; and those indexed by \emph{breaking
vertices}, which receive infinitely many edges in $E$, but only finitely many in the
maximal tail that they generate. Hong and Szyma\'nski completed this list by showing in
\cite[Theorem~2.10]{HS2004} that the non-gauge-invariant primitive ideals are indexed by
pairs consisting of a maximal tail containing a cycle with no entrance, and a complex
number of modulus~1.

The bulk of the work in \cite{HS2004} then went into the description of the Jacobson, or
hull-kernel, topology on $\Prim C^*(E)$ in terms of the indexing set described in the
preceding paragraph. Theorem~3.4 of \cite{HS2004} describes the closure of a subset of
$\Prim C^*(E)$ in terms of the combinatorial data of maximal tails and breaking vertices,
and the usual topology on the circle $\TT$. (Gabe \cite{Gabe} subsequently pointed out
and corrected a mistake in \cite[Theorem~3.4]{HS2004}, but there is no discrepancy for
row-finite graphs with no sources.) The technical details and notation involved even in
the statement of this theorem are formidable, with the upshot that applying Hong and
Szyma\'nski's result requires discussion of a fair amount of background and notation.
This is due to some extent to the complications introduced by infinite receivers in the
graph (to see this, compare \cite[Theorem~3.4]{HS2004} with the corresponding statement
\cite[Corollary~3.5]{HS2004} for row-finite graphs). But it is also caused in part by the
numerous cases involved in describing how the different flavours of primitive ideals
described in the preceding paragraph relate to one another topologically.

Here we restrict attention to the class of row-finite graphs with no sources originally
considered in \cite{KPRR, KPR, BPRS}; it is a well-known principal that results tend to
be cleaner in this context. The $C^*$-algebra of an arbitrary graph $E$ is a full corner
of the $C^*$-algebra of a row-finite graph $E_{ds}$ with no sources, called a
Drinen--Tomforde desingularisation $E$ \cite{DrinenTomforde}, so in principal our results
combined with the Rieffel correspondence can be used to describe the primitive-ideal
space and the ideal lattice of any graph $C^*$-algebra. But in practice there is serious
book-keeping hidden in this innocuous-sounding statement.

We take a somewhat different approach than Hong and Szyma\'nski. We start, as they do, by
identifying all the primitive ideals (Theorem~\ref{thm:prim ideal catalogue})---though we
take a slightly different route to the result. Our next step is to state precisely when a
given primitive ideal in our list belongs to the closure of some other set of primitive
ideals (Theorem~\ref{thm:1graph}). We could then describe the closure operation along the
lines of Hong and Szymanski's result, but here our approach diverges from theirs. We
describe a list of (not necessarily primitive) ideals $J_{H,U}$ of $C^*(E)$ indexed by
\emph{ideal pairs}, consisting of a saturated hereditary set $H$ and an assignment $U$ of
a proper open subset of the circle to every cycle with no entrance in the complement of
$H$. We describe each $J_{H,U}$ concretely by providing a family of generators. We prove
that the map $(H,U) \mapsto J_{H,U}$ is a bijection between ideal pairs and ideals, and
describe the inverse assignment (Theorem~\ref{thm:all ideals}). Finally, in
Theorem~\ref{thm:lattice}, we describe the containment relation and the intersection and
join operations on primitive ideals in terms of a partial ordering and a meet and a join
operation on ideal pairs.

One can recover the closure of a subset $X \subseteq \Prim C^*(E)$, and so Hong and
Szyma\'n\-ski's result, either by using the characterisation of points in $\overline{X}$
from Theorem~\ref{thm:1graph}, or by computing $\bigcap X$ using
Theorem~\ref{thm:lattice} and listing all the primitive ideals that contain this
intersection. To aid in doing the latter, we single out the ideal pairs that correspond
to primitive ideals (Remark~\ref{rmk:which primitive}), and identify when a given
$J_{H,U}$ is contained in a given primitive ideal (Lemma~\ref{lem:containment}).

We hope that this presentation of the ideal structure of $C^*(E)$ when $E$ is row-finite
with no sources will provide a useful and gentle introduction to Hong and Szyma\'nski's
beautiful result for arbitrary graphs; and in particular that it will be helpful to
readers familiar with the usual listing of gauge-invariant ideals using saturated
hereditary sets.

\textbf{Acknowledgement.} The exposition of this paper has benefitted greatly from the
suggestions of a very helpful referee. Thanks, whoever you are.

\subsection{Background}
We assume familiarity with Raeburn's monograph \cite{RaeburnCBMS} and take most of our
notation and conventions from there. We have made an effort not to assume any further
background.

We deal with row-finite directed graphs $E$ with no sources; these consist of countable
sets $E^0$, $E^1$ and maps $r,s : E^1 \to E^0$ such that $r$ is surjective and
finite-to-one. A Cuntz--Krieger family consists of projections $\{p_v : v \in E^0\}$ and
partial isometries $\{s_e : e \in E^1\}$ such that $s_e^*s_e = p_{s(e)}$ and $p_v =
\sum_{r(e) = v} s_e s^*_e$. We will use the convention where, for example, for $v \in
E^0$ the notation $vE^1$ means $\{e \in E^1 : r(e) = v\}$. A path of length $n > 0$ is a
string $\mu = e_1 \dots e_n$ of edges where $s(e_i) = r(e_{i+1})$, and $E^n$ denotes the
collection of paths of length $n$. We write $E^*$ for the collection of all finite paths
(including the vertices, regarded as paths of length~0), and set $vE^*:=\{\mu\in E^* :
r(\mu)=v\}$, $E^*w:=\{\mu\in E^* : s(\mu)=w\}$ and $vE^*w=vE^*\cap E^*w$ when $v,w\in
E^0$.

\section{Infinite paths and maximal tails}

Our first order of business is to relate maximal tails in a graph with the shift-tail
equivalence classes of infinite paths (see also \cite{IKSW}).

Recall that a \emph{maximal tail} in $E^0$ is a set $T \subseteq E^0$ such that:
\begin{itemize}[labelindent=3.3\parindent,leftmargin=*]\label{pg:tail}
    \item[(T1)] if $e \in E^1$ and $s(e) \in T$, then $r(e) \in T$;
    \item[(T2)] if $v \in T$ then there is at least one $e \in vE^1$ such that $s(e)
        \in T$; and
    \item[(T3)] if $v,w \in T$ then there exist $\mu \in vE^*$ and $\nu \in wE^*$
        such that $s(\mu) = s(\nu) \in T$.
\end{itemize}

If $T$ is a maximal tail, there is a subgraph $ET$ of $E$ with vertices $T$ and edges
$E^1 T:=\{e\in E^1 : s(e)\in T\}$.

An \emph{infinite path} in $E$ is a string $x = e_1 e_2 e_3 \cdots$ of edges such that
$s(e_i) = r(e_{i+1})$ for all $i$. We let $r(x):=r(e_1)$. Two infinite paths $x$ and $y$ are shift-tail
equivalent if there exist $m,n \in \NN$ such that
\[
    x_{i+m} = y_{i+n}\quad\text{ for all } i \in \NN.
\]
This shift-tail equivalence is (as the name suggests) an equivalence relation, and we
write $[x]$ for the equivalence class of an infinite path $x$.

Shift-tail equivalence classes $[x]$ of infinite paths correspond naturally to
irreducible representations of $C^*(E)$ (see Lemma~\ref{lem:pixz}). However, the
corresponding primitive ideals depend not on $[x]$, but only on the maximal tail
consisting of vertices that are the range of an infinite path in $[x]$. The next lemma
describes the relationship between shift-tail equivalence classes of infinite paths and
maximal tails.

\begin{lem}\label{lem:tails and orbits}
Let $E$ be a row-finite graph with no sources. A set $T \subseteq E^0$ is a maximal tail
if and only if there exists $x \in E^\infty$ such that $T = [x]^0 := \{r(y) : y \in
[x]\}$.
\end{lem}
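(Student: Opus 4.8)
The plan is to prove the two implications separately, the substance lying in the ``only if'' direction; the engine for both is the identification of $[x]^0$ with the set of vertices that can reach $x$. Concretely, I would first establish that for $x \in E^\infty$,
\[
[x]^0 = \{v \in E^0 : vE^* r(x_n) \neq \emptyset \text{ for some } n \geq 1\},
\]
noting that $r(x_1), r(x_2), \dots$ are exactly the vertices lying on $x$. For the inclusion ``$\subseteq$'', given $y \in [x]$ pick $m, n$ with $y_{i+n} = x_{i+m}$ for all $i$; then $y_1 \cdots y_n$ (the trivial path at $r(x_{m+1})$ when $n = 0$) lies in $r(y) E^* r(x_{m+1})$. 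For ``$\supseteq$'', if $\nu \in v E^* r(x_n)$ then $\nu\, x_n x_{n+1} \cdots$ is an infinite path in $[x]$ with range $v$.

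Given this description, the ``if'' direction reduces to routine checks. For (T1), if $s(e) \in [x]^0$ reaches $r(x_n)$ via $\mu$, then $e\mu$ witnesses $r(e) \in [x]^0$. For (T2), a vertex $v \in [x]^0$ reaches some $r(x_n)$ via a path $\mu$; its first edge lies in $v E^1$ and has source in $[x]^0$, and if $\mu$ is trivial (so $v = r(x_n)$) then $x_n$ itself serves. For (T3), if $v, w \in [x]^0$ reach $r(x_m)$ and $r(x_{m'})$, then with $N = \max\{m, m'\}$ both reach $r(x_N)$, since a later vertex of $x$ is reachable from an earlier one; this yields the required $\mu \in vE^*$ and $\nu \in wE^*$ with $s(\mu) = s(\nu) = r(x_N) \in [x]^0$.

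For the ``only if'' direction I would build $x$ by exhaustion. Fix a (possibly repeating) enumeration $v_1, v_2, \dots$ of the nonempty set $T$, and construct nested finite paths $\lambda_0 \subseteq \lambda_1 \subseteq \cdots$, all of whose vertices lie in $T$, with $s(\lambda_k) \in T$ and $v_k$ reaching $s(\lambda_k)$. At stage $k$, apply (T3) to $v_k$ and the current endpoint $t = s(\lambda_{k-1})$ to obtain $\alpha \in v_k E^*$ and $\beta \in t E^*$ with $s(\alpha) = s(\beta) =: t' \in T$; then apply (T2) to $t'$ to obtain an edge $e$ with $r(e) = t'$ and $s(e) \in T$, and set $\lambda_k = \lambda_{k-1} \beta e$. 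The appended edge forces the lengths to strictly increase, so $x := \lim_k \lambda_k$ lies in $E^\infty$, and a repeated application of (T1) shows every vertex of each $\lambda_k$---hence every vertex of $x$---lies in $T$. The characterization above together with (T1) then gives $[x]^0 \subseteq T$, while $v_k$ reaches $t' = s(\beta)$, a vertex of $x$, so $T \subseteq [x]^0$.

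The main obstacle is coordinating the construction in the ``only if'' direction so that all three requirements hold simultaneously: the path must remain inside $T$ (to force $[x]^0 \subseteq T$), it must be genuinely infinite, and it must eventually reach every vertex of $T$ (to force $[x]^0 \supseteq T$). Interleaving the (T3) step (which secures coverage of $v_k$) with a single (T2) step (which guarantees unbounded growth) is what reconciles these demands, with (T1) keeping every appended segment inside $T$.
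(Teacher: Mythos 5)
Your proof is correct, and its engine for the ``only if'' direction is the same as the paper's: enumerate $T$ (allowing repetitions) and splice together paths supplied by (T3) into an infinite path whose shift-tail class sweeps out exactly $T$. But your write-up differs from the paper's in two substantive ways, both to your credit. First, you interleave a (T2) step at every stage to force the lengths $|\lambda_k|$ to increase strictly. The paper concatenates only the paths produced by (T3), and since (T3) permits trivial paths, that concatenation need not be infinite as written --- for instance, if $T$ is the vertex set of a single loop, (T3) applied to a pair of equal vertices may return length-zero paths at every stage. Your extra step closes this small gap. Second, you actually prove the ``if'' direction --- that $[x]^0$ satisfies (T1)--(T3) --- by way of the clean reachability description $[x]^0 = \{v \in E^0 : vE^*r(x_n) \neq \emptyset \text{ for some } n\}$, whereas the paper omits this direction entirely (its argument for the containment $[x]^0 \subseteq T$ implicitly uses the same reachability idea, but it never verifies that $[x]^0$ is a maximal tail for an arbitrary infinite path $x$). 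What the paper's terser treatment buys is brevity; what yours buys is a self-contained and genuinely two-directional proof, at the cost of stating and checking the auxiliary characterisation of $[x]^0$.
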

\begin{proof}
First suppose that $T$ is a maximal tail. List $T = (v_1, v_2, \dots)$. Set $\lambda_1 =
\mu_1 = v_1 \in E^*$, and then inductively, having chosen $\mu_{i-1} \in v_{i-2} E^*$ and
$\lambda_{i-1} \in v_{i-1}E^*$ with $s(\lambda_{i-1}) = s(\mu)$, use~(T3) to find $\mu_i
\in v_{i-1}E^*$ and $\lambda_i \in v_i E^*$ such that $s(\mu_i) = s(\lambda_i) \in T$. We
obtain an infinite path $x = \mu_1\mu_2\mu_3 \cdots$. Since each
$\lambda_i\mu_{i+1}\mu_{i+2} \cdots$ belongs to $[x]$, we have $T \subseteq [x]^0$. For
the reverse containment, observe that if $v \in [x]^0$, then there exists $y \in [x]$
such that $v = r(y_1)$. By definition of $[x]$ there are $m,i$ such that $s(y_m) =
s(\mu_i)$. Since $\mu_i \in T$, $m$ applications of~(T1) show that $r(y_1) \in T$.
\end{proof}

We divide the maximal tails in $E$ into two sorts. Those which have a cycle with no
entrance, and those which don't. The main point is that, as pointed out in \cite{HS2004},
if $T$ contains a cycle without an entrance, then it contains just one of them, and is
completely determined by this cycle.

A \emph{cycle} in a graph $E$ is a path $\mu = \mu_1 \dots \mu_n\in E^*$ such that $r(\mu_1) =
s(\mu_n)$ and $s(\mu_i) \not= s(\mu_j)$ whenever $i \not= j$. Each cycle $\mu$
determines an infinite path $\mu^{\infty} := \mu\mu\mu\cdots$ and hence a maximal tail
$T_\mu := [\mu^\infty]^0$; it is straightforward to check that
\[
T_\mu = \{r(\lambda) : \lambda \in E^*r(\mu)\}.
\]

Given a cycle $\mu\in E^*$ and a subset $A$ of $E^0$ that contains $\{r(\mu_i) : i \le
|\mu|\}$, we say that \emph{$\mu$ is a cycle with no entrance in $A$} if $\{e\in r(\mu_i)E^1 : s(e)\in A\}=\{\mu_i\}$ for each $1 \le i \le |\mu|$.

\begin{lem}\label{lem:types of tails}
Let $E$ be a row-finite graph with no sources. Suppose that $T \subseteq E^0$ is a
maximal tail. Then either
\begin{itemize}[labelindent=\parindent,leftmargin=*]
    \item[a)] there is a cycle $\mu$ with no entrance in $T$ such that $T = T_\mu$, and
        this $\mu$ is unique up to cyclic permutation of its edges; or
    \item[b)] there is no cycle $\mu$ with no entrance in $T$.
\end{itemize}
\end{lem}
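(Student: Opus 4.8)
The plan is to prove the dichotomy directly: assume $T$ is a maximal tail that is \emph{not} of type b), meaning it contains some cycle with no entrance in $T$, and show that this forces $T = T_\mu$ for a cycle $\mu$ that is unique up to cyclic permutation. So the real content is the implication ``$T$ contains a cycle with no entrance in $T$'' $\implies$ ``$T = T_\mu$, and $\mu$ is essentially unique''; the statement as phrased is then just the observation that exactly one of a), b) holds.

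First I would fix a cycle $\mu = \mu_1 \cdots \mu_n$ with no entrance in $T$, with all $r(\mu_i) \in T$. The defining condition says that for each $i$, the only edge $e$ with $r(e) = r(\mu_i)$ and $s(e) \in T$ is $\mu_i$ itself. I would first check that $T_\mu \subseteq T$: since $r(\mu_1) \in T$ and $T$ is a maximal tail closed under (T1), any path $\lambda \in E^* r(\mu)$ whose edges stay in the subgraph $ET$ has $r(\lambda) \in T$; more carefully, I would argue that every vertex in $T_\mu = \{r(\lambda) : \lambda \in E^* r(\mu)\}$ lands in $T$ by using (T1) repeatedly, noting $r(\mu) = r(\mu_1) \in T$. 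Then for the reverse containment $T \subseteq T_\mu$, I would use (T3): given any $v \in T$, there exist $\alpha \in v E^*$ and $\beta \in r(\mu) E^*$ with $s(\alpha) = s(\beta) \in T$. The key point is that because $\mu$ has no entrance in $T$ and $s(\beta) \in T$, the path $\beta$ is forced to run along the cycle $\mu$ (reading backwards from $r(\mu)$, each incoming edge that stays in $T$ is a cycle edge), so $s(\beta) = s(\mu_j) = r(\mu_{j+1})$ for some $j$, whence $s(\alpha) \in \{r(\mu_i)\}$ lies on the cycle. Then $\alpha \in E^* r(\mu_{j+1})$ witnesses $v = r(\alpha) \in T_\mu$.

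The heart of the argument—and the step I expect to be the main obstacle—is this forcing claim: once a path enters $T$ and must terminate at a vertex $s(\beta) \in T$ that is reachable (via (T3)) in common with the cycle, the ``no entrance in $T$'' hypothesis should pin the relevant edges onto $\mu$. I would formalize it by induction on path length: if $e \in E^1$ with $r(e) = r(\mu_i)$ and $s(e) \in T$, then $e = \mu_i$ by the defining condition, so $s(e) = s(\mu_i) = r(\mu_{i-1})$ (indices mod $n$), again a cycle vertex. Walking backwards along any path into the cycle whose vertices all lie in $T$ thus stays on the cycle. The subtlety is to phrase the application of (T3) so that the relevant path $\beta$ genuinely has all its intermediate vertices in $T$ and ends at a cycle vertex; I would handle this by choosing $\beta$ to start at $r(\mu) \in T$ and applying (T1) to see each of its source-side vertices also lies in $T$.

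For uniqueness, suppose $\nu$ is another cycle with no entrance in $T$. Then $T = T_\mu = T_\nu$, so $r(\nu) \in T_\mu$ and $r(\mu) \in T_\nu$; by the forcing argument both cycles trace out the same edge-set in the subgraph $ET$. Concretely, $r(\mu)$ and $r(\nu)$ each lie on a cycle with no entrance in $T$, and since from any vertex the ``in-$T$'' outgoing edge at a cycle vertex is unique, the bi-infinite trajectory through $r(\mu)$ is a single periodic path; both $\mu^\infty$ and $\nu^\infty$ must be tails of this trajectory, forcing $\mu$ and $\nu$ to agree up to cyclic permutation. I would finish by noting that the final ``either/or'' is immediate since a) and b) are logical negations of each other once we have shown that containing such a cycle yields $T = T_\mu$.
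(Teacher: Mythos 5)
Your proof is correct, and its first half takes a genuinely different route from the paper's. To show $T = T_\mu$, the paper does not argue directly from the tail axioms: it invokes Lemma~\ref{lem:tails and orbits} to write $T = [x]^0$ for an infinite path $x$, picks $y \in [x]$ with $r(y) = r(\mu)$, observes that the only infinite path lying entirely in $T$ with range $r(\mu)$ is $\mu^\infty$ (the infinite-path version of your forcing claim), and concludes $T = [\mu^\infty]^0 = T_\mu$. Your double-containment argument---(T1) for $T_\mu \subseteq T$, and (T3) plus the finite-path forcing for $T \subseteq T_\mu$---is more elementary and self-contained, at the cost of re-deriving by hand what Lemma~\ref{lem:tails and orbits} already packages; the paper's route is shorter and makes visible the correspondence between tails and shift-tail equivalence classes that drives the rest of the paper. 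Your uniqueness argument is essentially the paper's: there one takes $\lambda \in r(\nu)E^*r(\mu)$, notes that the no-entrance property of $\nu$ forces $\lambda\mu$ to be an initial segment of $\nu^\infty$, and reads off that $\nu$ is a cyclic permutation of $\mu$; your ``tails of the same trajectory'' phrasing is the same mechanism. Two small points of care, neither fatal: (i) axiom (T1) propagates membership in $T$ from sources to ranges, so to see that every vertex of $\beta$ lies in $T$ you must start the induction at $s(\beta) \in T$ (which (T3) hands you) and work towards $r(\beta) = r(\mu)$, not ``from $r(\mu)$ towards the source'' as you wrote; and (ii) your $\alpha$ terminates at a cycle vertex $r(\mu_j)$ that need not equal $r(\mu)$, so to land literally in $T_\mu = \{r(\lambda) : \lambda \in E^*r(\mu)\}$ you should append the cycle segment $\mu_j\cdots\mu_{|\mu|}$ to $\alpha$. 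Both fixes are one-line.
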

\begin{proof}
Suppose that there is a cycle $\mu$ with no entrance in $T$. Lemma~\ref{lem:tails and
orbits} implies that $T = [x]^0$ for some infinite path $x$. So there exists $y \in [x]$
such that $r(y) = r(\mu)$, and since shift-tail equivalence is an equivalence relation,
we then have $T = [y]^0$. Since $\mu$ has no entrance in $T$, the only element of
$E^\infty$ lying entirely within $T$ and with range $r(\mu)$ is $\mu^\infty$. So $y =
\mu^\infty$, and $T = [\mu^\infty]^0 = T_\mu$.

If $\nu$ is another cycle with no entrance in $T = T_\mu$ then $r(\nu) E^* r(\mu) \not=
\emptyset$, say $\lambda \in r(\nu) E^* r(\mu)$. Since $\nu$ has no entrance in $T$, we
have $\lambda\mu = \nu^\infty_1 \cdots \nu^\infty_k$ for some $k$. In particular
$\nu^\infty_{k-|\mu|+1} \cdots \nu^\infty_k = \mu$, and we deduce that $\nu = \mu_i
\cdots \mu_{|\mu|}\mu_1 \cdots \mu_{i-1}$, where $i \equiv
k+1~(\operatorname{mod}\;|\mu|)$.
\end{proof}

We call a maximal tail $T$ satisfying~(a) in Lemma~\ref{lem:types of tails} a
\emph{cyclic maximal tail} and write $\Per(T) := |\mu|$. We call a maximal tail $T$
satisfying~(b) in Lemma~\ref{lem:types of tails} a \emph{aperiodic maximal tail}, and
define $\Per(T) := 0$.

\section{The irreducible representations}

In this section, we show that every primitive ideal of $C^*(E)$ naturally determines a
corresponding maximal tail, and then construct a family of irreducible representations of
$C^*(E)$ associated to each maximal tail of $E$.

The following lemma constructs a maximal tail from each primitive ideal of $C^*(E)$. It
was proved for arbitrary graphs in \cite[Lemma~4.1]{BHRS} using the relationship between
ideals and saturated hereditary sets established there and that primitive ideals of
separable $C^*$-algebras are prime. Here we present instead the direct
representation-theoretic argument of \cite[Theorem~5.3]{CKSS}. Recall that a saturated
hereditary subset of $E^0$ is a subset whose complement satisfies axioms (T1)~and~(T2) of
a maximal tail.

\begin{lem}[{\cite[Lemma~4.1]{BHRS}}]\label{lem:prim->tail}
Let $E$ be a row-finite graph with no sources. If $I$ is a primitive ideal of $C^*(E)$,
then $T := \{v \in E^0 : p_v \not\in I\}$ is a maximal tail of $E$.
\end{lem}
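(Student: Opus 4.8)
The plan is to use that a primitive ideal is, by definition, the kernel of an irreducible representation. So I would start by fixing an irreducible representation $\pi : C^*(E) \to \Bb(\Hh)$ with $I = \ker\pi$, and rewrite the candidate tail as $T = \{v \in E^0 : \pi(p_v) \neq 0\}$. The Cuntz--Krieger relations translate directly into statements about the images $\pi(p_v)$ and $\pi(s_e)$, and I expect (T1) and (T2) to follow purely from the fact that $\pi$ is a $*$-homomorphism, while (T3) will require genuine use of irreducibility.

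For (T1), suppose $s(e) \in T$, so $\pi(p_{s(e)}) \neq 0$. Since $s_e^* s_e = p_{s(e)}$, we get $\pi(s_e)^*\pi(s_e) \neq 0$, hence $\pi(s_e) \neq 0$ and $\pi(s_e s_e^*) \neq 0$. As $s_e s_e^*$ is a subprojection of $p_{r(e)}$, positivity of $\pi$ forces $\pi(p_{r(e)}) \neq 0$, i.e.\ $r(e) \in T$. For (T2), suppose $v \in T$. The relation $p_v = \sum_{r(e)=v} s_e s_e^*$ is a \emph{finite} sum because $E$ is row-finite, so $\pi(p_v) = \sum_{r(e)=v}\pi(s_e)\pi(s_e)^* \neq 0$ forces at least one summand $\pi(s_e)\pi(s_e)^*$ to be nonzero. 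For that edge $e \in vE^1$ we have $\pi(s_e) \neq 0$, whence $\pi(p_{s(e)}) = \pi(s_e)^*\pi(s_e) \neq 0$, so $s(e) \in T$.

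The crux is (T3). Given $v,w \in T$, I would fix unit vectors $\xi \in \pi(p_v)\Hh$ and $\eta \in \pi(p_w)\Hh$, which exist since $\pi(p_v),\pi(p_w) \neq 0$. Irreducibility makes $\pi(C^*(E))\xi$ dense in $\Hh$, so there is $a \in C^*(E)$ with $\langle \pi(a)\xi, \eta\rangle \neq 0$. Because $C^*(E) = \clsp\{s_\mu s_\nu^* : \mu,\nu \in E^*,\ s(\mu)=s(\nu)\}$ and $b \mapsto \langle \pi(b)\xi,\eta\rangle$ is a bounded linear functional, it must be nonzero on some spanning element: there exist $\mu,\nu \in E^*$ with $s(\mu)=s(\nu)$ and $\langle \pi(s_\mu s_\nu^*)\xi,\eta\rangle \neq 0$. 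Writing this as $\langle \pi(s_\nu^*)\xi, \pi(s_\mu^*)\eta\rangle \neq 0$ and using $\xi = \pi(p_v)\xi$, $\eta = \pi(p_w)\eta$, the orthogonality of distinct vertex projections forces $r(\nu) = v$ and $r(\mu) = w$; moreover $\pi(s_\nu^*)\xi$ and $\pi(s_\mu^*)\eta$ lie in $\pi(p_{s(\nu)})\Hh$ and $\pi(p_{s(\mu)})\Hh$ respectively, so their inner product can be nonzero only because $s(\mu)=s(\nu)=:u$ with $\pi(p_u)\neq 0$. Then $\nu \in vE^*$, $\mu \in wE^*$, and $s(\nu)=s(\mu)=u \in T$, which is exactly (T3).

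The main obstacle I anticipate is the careful bookkeeping in (T3): extracting a \emph{single} monomial $s_\mu s_\nu^*$ from the density argument, and then reading off the range and source constraints by exploiting mutual orthogonality of the projections $\pi(p_v)$. Conditions (T1) and (T2) are routine manipulations of the Cuntz--Krieger relations and do not use irreducibility at all.
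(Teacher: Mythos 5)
Your proposal is correct and takes essentially the same approach as the paper: the crux (T3) is the identical irreducibility argument---use cyclicity/density to produce a nonzero matrix element between $\pi(p_v)\Hh$ and $\pi(p_w)\Hh$, then extract a single spanning monomial $s_\mu s_\nu^*$ whose range and source data yield the common vertex in $T$. The only cosmetic difference is that for (T1) and (T2) the paper simply cites \cite[Lemma~4.5]{RaeburnCBMS} (that $\{v : p_v \in I\}$ is saturated and hereditary), whereas you prove these two conditions directly from the Cuntz--Krieger relations; your direct arguments are correct and amount to inlining that lemma's proof.
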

\begin{proof}
The set of $v \in E^0$ such that $p_v \in I$ is a saturated hereditary set by
\cite[Lemma~4.5]{RaeburnCBMS} (see also \cite[Lemma~4.2]{BPRS}). So its complement $T$
satisfies (T1)~and~(T2). To establish~(T3), fix $v,w \in T$. Take an irreducible
representation $\pi : C^*(E) \to \Bb(\Hh)$ such that $\ker(\pi) = I$. Since $v
\in T$, we have $p_v \not\in I$, and so $\pi(p_v)\Hh \not= \{0\}$. Fix $\xi \in
\pi(p_v)\Hh$ with $\|\xi\| = 1$. Since $p_w \not\in I$, the space $\pi(p_w)\Hh$ is also a
nontrivial subspace of $\Hh$. Since $\pi$ is irreducible, $\xi$ is cyclic for $\pi$, and
so there exists $a \in C^*(E)$ such that $\pi(p_w)\pi(a)\xi = \pi(p_w a p_v)\xi$ is
nonzero. In particular, we have $\pi(p_w a p_v) \not= 0$. Since $C^*(E) = \clsp\{s_\mu
s^*_\nu : s(\mu) = s(\nu)\}$, and since $p_w s_\mu s^*_\nu p_v \not= 0$ only if $r(\mu) =
w$ and $r(\nu) = v$, we have
\[
\pi(p_w a p_v) \in \clsp\{\pi(s_\mu s^*_\nu) : r(\mu) = w, r(\nu) = v, s(\mu) = s(\nu)\} \setminus \{0\}.
\]
So there exist $\mu,\nu \in E$ with $r(\mu) = w$, $r(\nu) = v$, $s(\mu)= s(\nu)$, and
$\pi(s_\mu p_{s(\mu)} s^*_\nu) = \pi(s_\mu s^*_\nu) \not= 0$. In particular,
$\pi(p_{s(\mu)}) \not= 0$, giving $p_{s(\mu)} \not\in I$. So $s(\mu) \in T$ satisfies
$wE^* s(\mu), vE^*s(\mu) \not= \emptyset$.
\end{proof}

Next we show how to recover a family of primitive ideals from the shift-tail equivalence
class of an infinite path.

\begin{lem}\label{lem:pixz}
Let $E$ be a row-finite directed graph with no sources. For $x \in E^\infty$ and $z \in
\TT$, there is an irreducible representation $\pi_{x,z} : C^*(E) \to \Bb(\ell^2([x]))$
such that for all $y \in [x]$, $v \in E^0$ and $e \in E^1$, we have
\[
\pi_{x,z}(p_v) \delta_y = \begin{cases} \delta_y&\text{ if $r(y_1) = v$} \\ 0 &\text{ otherwise}\end{cases}
    \qquad\text{ and }\qquad
\pi_{x,z}(s_e) \delta_y = \begin{cases} z\delta_{ey}&\text{ if $r(y_1) = s(e)$} \\ 0 &\text{ otherwise.}\end{cases}
\]
We have $\{v \in E^0 : p_v \not\in \ker(\pi_{x,z})\} = [x]^0$.
\end{lem}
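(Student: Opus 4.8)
The plan is to construct the representation explicitly by the formulas given, verify the Cuntz–Krieger relations, establish irreducibility, and finally compute the set of vertices whose projections act nontrivially. Let me think through each piece.

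First, for the construction: I'd define operators on $\ell^2([x])$ by the stated formulas on the orthonormal basis $\{\delta_y : y \in [x]\}$. I need these to be bounded. Each $\pi_{x,z}(p_v)$ is a projection onto $\overline{\operatorname{span}}\{\delta_y : r(y_1) = v\}$, hence bounded. Each $\pi_{x,z}(s_e)$ sends $\delta_y \mapsto z\delta_{ey}$ when $r(y_1) = s(e)$ and $0$ otherwise; since $ey \in [x]$ whenever $y \in [x]$ and $r(y_1) = s(e)$ (prepending an edge preserves the shift-tail class), and since the map $y \mapsto ey$ is injective on this domain, this is a partial isometry with $\|s_e\| \le 1$.

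The main verification is the Cuntz–Krieger relations. For $s_e^* s_e = p_{s(e)}$: I'd compute $s_e^*$ as $\delta_{ey} \mapsto \bar z \delta_y$ on basis vectors of the form $ey$ and $0$ elsewhere, then check that $s_e^* s_e \delta_y = \delta_y$ exactly when $r(y_1) = s(e)$. For $p_v = \sum_{r(e)=v} s_e s_e^*$: fix a basis vector $\delta_y$ with $r(y_1) = v$; since $E$ has no sources and is row-finite, $y = y_1 y_2 \cdots$ with $r(y_1) = v$, so $y = e y'$ for the unique edge $e := y_1$ with $r(e) = v$ and $s(e) = r(y_2)$. Then $s_e s_e^* \delta_y = \delta_y$ for this one edge and the other summands vanish, giving $\sum_{r(e)=v} s_e s_e^* \delta_y = \delta_y = p_v \delta_y$. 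If $r(y_1) \ne v$, both sides kill $\delta_y$. By the universal property of $C^*(E)$, this Cuntz–Krieger family induces the homomorphism $\pi_{x,z}$.

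For irreducibility I'd show every nonzero invariant closed subspace is everything; equivalently, that the commutant is trivial, or most directly that every basis vector $\delta_y$ is cyclic. Given $y, y' \in [x]$, shift-tail equivalence provides $m, n$ with $y_{i+m} = y'_{i+n}$ for all $i$; writing $\alpha = y'_1 \cdots y'_n$ and $\beta = y_1 \cdots y_m$, the common tail $w := y_{m+1} y_{m+2} \cdots = y'_{n+1} y'_{n+2}\cdots$ satisfies $y = \beta w$ and $y' = \alpha w$, so $\pi_{x,z}(s_\alpha s_\beta^*)\delta_y = z^{|\alpha|}\bar z^{|\beta|}\delta_{y'}$ is a nonzero scalar multiple of $\delta_{y'}$. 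Thus the span of $\pi_{x,z}(C^*(E))\delta_y$ contains every $\delta_{y'}$ and is dense, so $\delta_y$ is cyclic; as every vector in a nonzero invariant subspace has a nonzero component on some $\delta_y$ (and $p_{r(y_1)}$ projects onto the $\delta_y$-line within its range after a further cycle argument, or one simply invokes cyclicity of each basis vector), the representation is irreducible.

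Finally, the last claim: $p_v$ acts as a nonzero operator iff there is some $y \in [x]$ with $r(y_1) = v$, which is precisely the condition $v \in [x]^0$, so $\{v : p_v \notin \ker \pi_{x,z}\} = [x]^0$. The main obstacle is organizing the irreducibility argument cleanly — in particular confirming that isolating a single basis vector $\delta_y$ (not just a finite-dimensional subspace) is achievable, which is where I must use that $\alpha, \beta$ can be chosen so that $s_\alpha s_\beta^*$ maps $\delta_y$ to a \emph{single} basis vector rather than a sum; the no-sources, row-finite hypotheses are what make the indexing and the partial-isometry bookkeeping behave, and I'd want to double-check that the choice of $m,n$ in the definition of shift-tail equivalence can be taken minimal enough that $\beta$ and $\alpha$ are genuine finite paths with $s(\alpha)=s(\beta)=r(w)$.
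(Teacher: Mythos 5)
Your construction of the Cuntz--Krieger family and the direct verification of the relations is sound (the paper instead obtains the formulas by restricting the infinite-path space representation of \cite[Example~10.2]{RaeburnCBMS} to the invariant subspace $\ell^2([x])$ and precomposing with the gauge automorphism $\gamma_z$, but your hands-on check is a legitimate substitute), and your computation of $\{v : p_v \notin \ker\pi_{x,z}\}$ is correct. The genuine gap is in the irreducibility step. You prove that every basis vector $\delta_y$ is cyclic and then conclude irreducibility either by ``simply invoking'' that fact or by a vague ``further cycle argument'' with $p_{r(y_1)}$. Neither works. Cyclicity of every basis vector does \emph{not} imply irreducibility: for the two-dimensional representation of the commutative $C^*$-algebra generated by the self-adjoint unitary $S : e_1 \mapsto e_2,\ e_2 \mapsto e_1$ on $\CC^2$, both standard basis vectors are cyclic, yet $\CC(e_1 + e_2)$ is a proper invariant subspace. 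And $p_{r(y_1)}$ does not isolate the line $\CC\delta_y$: its range contains $\delta_w$ for \emph{every} $w \in [x]$ with $r(w_1) = r(y_1)$, which is typically infinite-dimensional. So as written you have not excluded a nonzero closed invariant subspace that contains no basis vector.

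The missing idea is exactly the strong-limit argument in the paper's proof: the operators $\pi_{x,z}(s_{y_1\cdots y_n}s^*_{y_1\cdots y_n})$ are the orthogonal projections onto $\clsp\{\delta_w : w \in [x],\ w_1\cdots w_n = y_1\cdots y_n\}$; these decrease in $n$, and the intersection of their ranges is $\CC\delta_y$ because $y$ is the only infinite path whose initial segment of length $n$ is $y_1\cdots y_n$ for all $n$. Hence they converge strongly to the rank-one projection $\theta_{y,y}$. Given a nonzero closed invariant subspace $M$ and $\xi \in M$ with $c_y := \langle \xi, \delta_y\rangle \neq 0$, the vectors $\pi_{x,z}(s_{y_1\cdots y_n}s^*_{y_1\cdots y_n})\xi \in M$ converge in norm to $c_y \delta_y$, so $\delta_y \in M$, and then your cyclicity computation gives $M = \ell^2([x])$. (Equivalently, as the paper phrases it, combining this strong limit with your $s_\alpha s_\beta^*$ computation shows that every rank-one operator $\theta_{y,y'}$, hence all of $\Kk(\ell^2([x]))$, lies in the strong closure of $\pi_{x,z}(C^*(E))$, forcing the commutant to be trivial.) With this insertion your proof is complete and essentially coincides with the paper's.
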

\begin{proof}
It is easy to check that $\ell^2([x])$ is an invariant subspace of $\ell^2(E^\infty)$ for
the infinite-path space representation of \cite[Example~10.2]{RaeburnCBMS} (with $k=1$).
So the infinite-path space representation reduces to a representation on
$\Bb(\ell^2([x]))$. Precomposing with the gauge automorphism $\gamma_z : s_e \mapsto z
s_e$ of \cite[Proposition~2.1]{RaeburnCBMS} yields a representation $\pi_{x,z}$
satisfying the desired formula.

To see that $\pi_{x,z}$ is irreducible, first observe that for each $x$, the rank-1
projection $\theta_{x,x}$ onto $\CC \delta_x$ is equal to the strong limit
\[
\theta_{x,x} = \lim_{n \to \infty} \pi_{x,z}(s_{x_1 \cdots x_n} s^*_{x_1 \cdots x_n}).
\]
If $y,z \in [x]$, then $y = \mu w$ and $z = \nu w$ for some $\mu,\nu \in E^*$ and $w \in
[x]$. Thus the rank-1 operator $\theta_{y,z}$ from $\CC \delta_z$ to $\CC \delta_y$ is in
the strong closure of the image of $\pi_{x,z}$:
\[
\theta_{y,z} = z^{|\nu| - |\mu|} \pi_{x,z}(s_\mu) \theta_{w,w} \pi_{x,z}(s^*_\nu)
    = \lim_{n \to \infty} \pi_{x,z}(z^{|\nu| - |\mu|} s_{\mu w_1 \cdots w_n} s^*_{\nu w_1 \cdots w_n}).
\]
So $\Kk(\ell^2([x]))$ is contained in the strong closure of $\pi_{x,z}(C^*(E))$. Thus
$\pi_{x,z}$ is irreducible.

If $v \not\in [x]^0$, then $v \not= r(y_1)$ for any $y \in [x]$, and so the formula for
$\pi_{x,z}$ shows that $p_v \in \ker(\pi_{x,z})$. On the other hand, if $v \in [x]^0$,
then we can find $y \in [x]$ with $r(y_1) = v$, and then $\pi_{x,z}(p_v) \delta_y =
\delta_y \not= 0$.
\end{proof}

Next we want to know when two of the irreducible representations constructed as in
Lemma~\ref{lem:pixz} have the same kernel. For the following, recall that if $H \subseteq
E^0$ is a hereditary set (i.e., $E^0\setminus H$ satisfies axiom (T1) of a
maximal tail.), then $E \setminus EH$ is the subgraph of $E$ with vertices $E^0
\setminus H$ and edges $E^1 \setminus E^1 H$. Note that if $T$ is a maximal tail, then $H
:= E^0 \setminus T$ is a saturated hereditary set, and then $E \setminus EH = ET$.

\begin{prp}\label{prp:same ideal}
Let $E$ be a row-finite graph with no sources. Fix $x,y \in E^\infty$ and $w,z \in \TT$.
The irreducible representations $\pi_{x,w}$ and $\pi_{y,z}$ have the same kernel if and
only if $[x]^0 = [y]^0$ and $w^{\Per([x]^0)} = z^{\Per([x]^0)}$.
\end{prp}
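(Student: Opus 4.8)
The plan is to read off the maximal tail from the vertex projections and then split into the cyclic and aperiodic cases. For either direction, the last sentence of Lemma~\ref{lem:pixz} gives $\{v : p_v \notin \ker\pi_{x,w}\} = [x]^0$ and likewise $\{v : p_v \notin \ker\pi_{y,z}\} = [y]^0$, so when the kernels agree I immediately get $[x]^0 = [y]^0 =: T$, and in the ``if'' direction I may assume this. If $T$ is aperiodic then $\Per(T) = 0$ and the scalar condition $w^{\Per(T)} = z^{\Per(T)}$ is vacuous, so both implications reduce to showing $\ker\pi_{x,w}$ depends on $T$ alone. If $T$ is cyclic, Lemma~\ref{lem:types of tails} supplies the unique (up to cyclic permutation) cycle $\mu$ with no entrance in $T = T_\mu$, with $\Per(T) = |\mu| =: n$. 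The no-entrance argument in the proof of that lemma shows $\mu^\infty$ is the only infinite path lying in $T$ with range $r(\mu)$; since $r(\mu) \in [x]^0$ there is $v \in [x]$ with $r(v) = r(\mu)$, and $v$ lies entirely in $[x]^0 = T$, forcing $v = \mu^\infty$. Hence $[x] = [y] = [\mu^\infty]$ and both representations act on the same space $\ell^2([\mu^\infty])$.

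For necessity in the cyclic case I would use that same uniqueness: $\mu^\infty$ is the only element of $[\mu^\infty]$ with range $r(\mu)$, so $\pi_{x,w}(p_{r(\mu)})$ is exactly the rank-one projection $q := \theta_{\mu^\infty,\mu^\infty}$, independent of $w$. Since $s_\mu = p_{r(\mu)} s_\mu p_{r(\mu)}$ and $\pi_{x,w}(s_\mu)\delta_{\mu^\infty} = w^n \delta_{\mu^\infty}$, compressing by $q$ gives $\pi_{x,w}(s_\mu) = w^n q$, and the identical computation gives $\pi_{y,z}(s_\mu) = z^n q$. Therefore $s_\mu - w^n p_{r(\mu)} \in \ker\pi_{x,w}$; if the kernels coincide, applying $\pi_{y,z}$ yields $(z^n - w^n) q = 0$, and as $r(\mu) \in T$ forces $q \neq 0$ we conclude $w^n = z^n$.

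For sufficiency in the cyclic case, given $w^n = z^n$ I set $\zeta := z/w$, so $\zeta^n = 1$, and build a diagonal unitary $V\delta_v = \zeta^{h(v)}\delta_v$ on $\ell^2([\mu^\infty])$, where $h(v) := m - k$ for any $m,k$ with $v_{i+m} = (\mu^\infty)_{i+k}$ for all $i$. The key point is that $h(v)$ is well defined modulo $n$ by the period-$n$ periodicity of $\mu^\infty$, so $\zeta^{h(v)}$ is unambiguous, and $h(ev) = h(v) + 1$; a direct check on the generators $p_v$ and $s_e$ then shows $\operatorname{Ad}(V)\circ\pi_{x,w} = \pi_{y,z}$, so the two representations are unitarily equivalent and have the same kernel. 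For the aperiodic case (both directions at once) I would instead show that both kernels equal the gauge-invariant ideal $I_H$ associated with the saturated hereditary set $H := E^0 \setminus T$, for every scalar: since $\pi_{x,w}(p_v) = 0$ for $v \in H$, the representation factors through $C^*(E)/I_H \cong C^*(ET)$, the induced Cuntz--Krieger $ET$-family has all vertex projections nonzero because $T = [x]^0$, and aperiodicity of $T$ says precisely that every cycle in $ET$ has an entrance, i.e.\ $ET$ satisfies condition~(L). The Cuntz--Krieger uniqueness theorem \cite{RaeburnCBMS} then makes the induced representation faithful, giving $\ker\pi_{x,w} = I_H$; the same holds for $\pi_{y,z}$, so both kernels coincide.

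I expect the main obstacle to be the aperiodic sufficiency, where $[x]$ and $[y]$ may be genuinely different orbits and $w, z$ are arbitrary, so no intertwining unitary is available to compare the representations directly; the resolution is to sidestep the representations entirely and identify both kernels with the single ideal $I_H$ via condition~(L) and Cuntz--Krieger uniqueness. The cyclic sufficiency carries a smaller but real subtlety, namely verifying that the cocycle $h$ defining the intertwining unitary is consistent modulo the period $n$, which is exactly where the hypothesis $w^n = z^n$ enters through $\zeta^n = 1$.
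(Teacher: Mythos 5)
Your proposal is correct, but in the cyclic case it takes a genuinely different route from the paper's. The paper funnels the whole cyclic case through Lemma~\ref{lem:irreps}\ref{it:irreps2}, which is proved using primeness of primitive ideals (Lemma~\ref{lem:irrep kernels}) and Rieffel induction across the full corner $p_{r(\mu)}Jp_{r(\mu)}$, identifying each representation with a character of $C^*(s_\mu)\cong C(\sigma(s_\mu))$; the proposition then follows from the computation $\pi_{x,w}(s_\mu)=w^{|\mu|}$ on $\CC\delta_{\mu^\infty}$. You avoid that machinery entirely: for necessity you note directly that $s_\mu - w^{|\mu|}p_{r(\mu)}\in\ker\pi_{x,w}$ and apply $\pi_{y,z}$ to get $(z^{|\mu|}-w^{|\mu|})\theta_{\mu^\infty,\mu^\infty}=0$; for sufficiency you exhibit an explicit diagonal unitary $V\delta_v=\zeta^{h(v)}\delta_v$ with $V\pi_{x,w}(\cdot)V^*=\pi_{y,z}(\cdot)$, where well-definedness of $h$ modulo $|\mu|$ (which rests on the vertices of $\mu$ being distinct, so that $\mu^\infty$ has exact period $|\mu|$) is precisely where the hypothesis $\zeta^{|\mu|}=1$ enters. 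This is more elementary---no primeness, no Morita theory---and it yields the strictly stronger conclusion that in the cyclic case $\pi_{x,w}$ and $\pi_{y,z}$ are unitarily equivalent, not merely that their kernels agree (your reduction $[x]=[y]=[\mu^\infty]$, via the uniqueness of the infinite path in $T$ with range $r(\mu)$, is what makes the two representations act on the same space). What the paper's detour buys is reusability: Lemma~\ref{lem:irreps}\ref{it:irreps2} applies to \emph{arbitrary} irreducible representations whose vertex-projection support is $T$ and exhibits generators for their kernels, and both of these extra features are needed later (for the surjectivity step in Theorem~\ref{thm:prim ideal catalogue}, and for the generating sets used in Theorem~\ref{thm:1graph} and Remark~\ref{rmk:which primitive}), whereas your argument is tied to the concrete representations $\pi_{x,w}$. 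Your aperiodic case and the initial reduction to $[x]^0=[y]^0$ coincide with the paper's: Cuntz--Krieger uniqueness applied to $C^*(E)/I_H\cong C^*(ET)$, and the final statement of Lemma~\ref{lem:pixz}.
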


The crux of the proof of Proposition~\ref{prp:same ideal} is Lemma~\ref{lem:irreps},
which we state separately because it is needed again later to prove that every primitive
ideal is of the form $I_{\pi,z}$. Our proof of Lemma~\ref{lem:irreps} in turn relies on
the following standard fact about kernels of irreducible representations; we thank the
anonymous referee for suggesting the following elementary proof.

\begin{lem}\label{lem:irrep kernels}
Let $A$ be a $C^*$-algebra, let $J$ be an ideal of $A$, and let $\pi_1$ and $\pi_2$ be
irreducible representations of $A$ that do not vanish on $J$. Then $\ker(\pi_1) =
\ker(\pi_2)$ if and only if $\ker(\pi_1) \cap J = \ker(\pi_2) \cap J$.
\end{lem}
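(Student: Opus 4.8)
The forward implication is immediate, so the plan is to concentrate on the reverse one, and the strategy is to show that for an irreducible representation $\pi$ that does not vanish on $J$, the ideal $\ker(\pi)$ is completely recoverable from $\ker(\pi) \cap J$. Once this is established, applying it to both $\pi_1$ and $\pi_2$ converts the hypothesis $\ker(\pi_1) \cap J = \ker(\pi_2) \cap J$ directly into $\ker(\pi_1) = \ker(\pi_2)$.

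The key observation I would isolate is that $P := \ker(\pi)$ is a \emph{prime} ideal (being primitive), and that $J \not\subseteq P$ is exactly the statement that $\pi$ does not vanish on $J$. Under these two conditions I claim
\[
    P = \{a \in A : aJ \subseteq P\}.
\]
The inclusion ``$\subseteq$'' holds because $P$ is an ideal. For ``$\supseteq$'', suppose $aJ \subseteq P$ and use $J \not\subseteq P$ to pick $b \in J \setminus P$. Since $J$ is an ideal, $cb \in J$ for every $c \in A$, whence $acb \in aJ \subseteq P$; that is, $aAb \subseteq P$. Primeness of $P$ then forces $a \in P$ or $b \in P$, and as $b \notin P$ we conclude $a \in P$. (If one prefers to avoid citing that primitive ideals are prime, the single instance needed, namely $aAb \subseteq \ker(\pi) \Rightarrow a \in \ker(\pi)$ or $b \in \ker(\pi)$, follows at once from irreducibility: if $\pi(b) \neq 0$ then $\pi(b)\xi$ is a nonzero, hence cyclic, vector, so $\overline{\pi(A)\pi(b)\xi} = \Hh$, and $\pi(a)\pi(A)\pi(b) = 0$ forces $\pi(a) = 0$.)

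To finish, I would note that $aJ \subseteq J$ holds automatically, so the condition $aJ \subseteq P$ is equivalent to $aJ \subseteq P \cap J$; the displayed formula therefore reads $P = \{a \in A : aJ \subseteq P \cap J\}$, exhibiting $P$ as a function of $P \cap J$ alone. Specialising to $P = \ker(\pi_1)$ and $P = \ker(\pi_2)$ and invoking $\ker(\pi_1) \cap J = \ker(\pi_2) \cap J$ then gives the result. I expect no genuine obstacle here beyond setting up the prime-ideal characterisation; the one place demanding care is the verification that $aAb \subseteq P$, which is precisely where both the ideal property of $J$ and the hypothesis $aJ \subseteq P$ are brought to bear.
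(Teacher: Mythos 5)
Your proof is correct, and while it runs on the same underlying fact as the paper's proof---that kernels of irreducible representations are prime---you deploy that fact in a genuinely different way. The paper's argument is ideal-theoretic and symmetric: citing that $\ker(\pi_2)$ is prime (if $I_1 \cap I_2 \subseteq \ker(\pi_2)$ for ideals $I_1, I_2$, then $I_1 \subseteq \ker(\pi_2)$ or $I_2 \subseteq \ker(\pi_2)$), it observes that $\ker(\pi_1) \cap J = \ker(\pi_2) \cap J \subseteq \ker(\pi_2)$ while $J \not\subseteq \ker(\pi_2)$, concludes $\ker(\pi_1) \subseteq \ker(\pi_2)$, and finishes by symmetry. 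You instead use the element-level form of primeness ($aAb \subseteq P$ implies $a \in P$ or $b \in P$) to prove a reconstruction formula $P = \{a \in A : aJ \subseteq P \cap J\}$, exhibiting each kernel as an explicit function of its intersection with $J$; equality of the intersections then forces equality of the kernels with no symmetry step needed. The two forms of primeness are equivalent for closed ideals, so the mathematical content is close, but your route buys two things: it is self-contained, since your parenthetical argument (cyclicity of the nonzero vector $\pi(b)\xi$ under an irreducible representation) proves exactly the instance of primeness you need, where the paper must cite Pedersen; and it proves slightly more, namely an explicit inverse to the map $P \mapsto P \cap J$, which is the standard bijection between primitive ideals of $A$ not containing $J$ and primitive ideals of $J$. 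What the paper's version buys in exchange is brevity---once primeness is quoted, the whole reverse implication is three sentences---and it stays entirely at the level of ideals, never descending to elements or vectors.
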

\begin{proof}
The ``$\implies$'' direction is obvious. Suppose that $\ker(\pi_1) \cap J = \ker(\pi_2)
\cap J$. By symmetry, it suffices to show that $\ker(\pi_1) \subseteq \ker(\pi_2)$. Since
$\pi_2$ is irreducible, $\ker(\pi_2)$ is primitive, and hence prime (see, for example,
\cite[Proposition~3.13.10]{Pedersen}). By assumption, we have $\ker(\pi_1) \cap J =
\ker(\pi_2) \cap J \subseteq \ker(\pi_2)$. Since $\pi_2$ does not vanish on $J$, we have
$J \not\subseteq \ker(\pi_2)$. So primeness of $\ker(\pi_2)$ forces $\ker(\pi_1)
\subseteq \ker(\pi_2)$.
\end{proof}

\begin{lem}\label{lem:irreps}
Let $E$ be a row-finite graph with no sources, and suppose that $T$ is a maximal tail of
$E$. Let $H := E^0 \setminus T$.
\begin{enumerate}[label=(\arabic*),labelindent=\parindent,leftmargin=*]
\item\label{it:irreps1} Suppose that $T$ is an aperiodic tail and $\pi$ is an
    irreducible representation of $C^*(E)$ such that $\{v \in E^0 : \pi(p_v) \not=
    0\} = T$. Then $\ker\pi$ is generated as an ideal by $\{p_v : v \in H\}$.
\item\label{it:irreps2} Suppose that $T$ is a cyclic tail and that $\mu$ is a cycle
    with no entrance in $T$. Suppose that $\pi_1$ and $\pi_2$ are irreducible
    representations of $C^*(E)$ such that
    \[
        \{v : \pi_1(p_v) \not= 0\} = T = \{v : \pi_2(p_v) \not= 0\}.
    \]
    Then each $\pi_i$ restricts to a 1-dimensional representation of $C^*(s_\mu)$,
    and $\ker\pi_1 = \ker\pi_2$ if and only if $\pi_1(s_\mu) = \pi_2(s_\mu)$ as
    complex numbers. Each $\ker\pi_i$ is generated as an ideal by $\{p_v : v \in H\}
    \cup \{\pi_i(s_\mu) p_{r(\mu)} - s_\mu\}$.
\end{enumerate}
\end{lem}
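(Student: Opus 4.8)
The plan is to push everything into the quotient by the gauge-invariant ideal $I_H$ generated by $\{p_v:v\in H\}$. Each representation in the statement satisfies $\pi(p_v)=0$ precisely for $v\in H$, so it annihilates $I_H$ and factors through $C^*(E)/I_H\cong C^*(ET)$; write $F:=ET$, a row-finite graph with vertex set $T$ on which the induced representation is nonzero on every vertex projection. For part~\ref{it:irreps1} the key observation is that aperiodicity of $T$ says exactly that no cycle of $F$ lacks an entrance in $F$, i.e.\ $F$ satisfies condition~(L). Hence the Cuntz--Krieger uniqueness theorem applies: the induced representation $\bar\pi$ of $C^*(F)$, being nonzero on each vertex projection, is faithful, so $\ker\pi=I_H$, which is generated by $\{p_v:v\in H\}$ as claimed.

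For part~\ref{it:irreps2} I would first analyse the corner at $r(\mu)$. Because $\mu$ has no entrance in $T$, every path of $F$ with range $r(\mu)$ is an initial segment of $\mu^\infty$; this gives $s_\mu s_\mu^*=p_{r(\mu)}$ in $C^*(F)$, and reducing a general $s_\alpha s_\beta^*$ with $r(\alpha)=r(\beta)=r(\mu)$ to a power of $s_\mu$ shows that the corner $p_{r(\mu)}C^*(F)p_{r(\mu)}$ is the commutative algebra $C^*(s_\mu)\cong C(\TT)$, with $s_\mu$ a unitary generator. Since $\bar\pi_i$ is irreducible and this corner is abelian, the compression of $\bar\pi_i(C^*(F))$ to $\bar\pi_i(p_{r(\mu)})\Hh_i$ is an irreducible representation of an abelian algebra, forcing $\bar\pi_i(p_{r(\mu)})$ to be a rank-one projection. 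Consequently $\pi_i$ restricts to a one-dimensional representation of $C^*(s_\mu)$, and $\bar\pi_i(s_\mu)=\lambda_i\,\bar\pi_i(p_{r(\mu)})$ for a unique $\lambda_i\in\TT$; this scalar is the number $\pi_i(s_\mu)$.

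The forward implication of the equivalence is then immediate: if $\ker\pi_1=\ker\pi_2$ then $s_\mu-\lambda_i p_{r(\mu)}\in\ker\pi_i$ for $i=1,2$, and subtracting gives $(\lambda_1-\lambda_2)p_{r(\mu)}\in\ker\pi_1$; as $p_{r(\mu)}\notin\ker\pi_1$ this forces $\lambda_1=\lambda_2$. For the converse I would invoke Lemma~\ref{lem:irrep kernels} with the ideal $\mathcal{J}\trianglelefteq C^*(F)$ generated by $p_{r(\mu)}$, in which $p_{r(\mu)}$ is by construction full. Both $\bar\pi_i$ are nonzero on $\mathcal{J}$, and on the full corner $p_{r(\mu)}\mathcal{J}p_{r(\mu)}=p_{r(\mu)}C^*(F)p_{r(\mu)}\cong C(\TT)$ each $\bar\pi_i$ restricts to evaluation at $\lambda_i$. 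When $\lambda_1=\lambda_2$ these corner representations coincide, so by the Rieffel correspondence (ideals of $\mathcal{J}$ correspond to ideals of the full corner) we obtain $\ker\bar\pi_1\cap\mathcal{J}=\ker\bar\pi_2\cap\mathcal{J}$, and Lemma~\ref{lem:irrep kernels} upgrades this to $\ker\bar\pi_1=\ker\bar\pi_2$, hence $\ker\pi_1=\ker\pi_2$.

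It remains to identify $\ker\pi_i$ with the ideal $J_i$ generated by $\{p_v:v\in H\}\cup\{\lambda_i p_{r(\mu)}-s_\mu\}$. The inclusion $J_i\subseteq\ker\pi_i$ is clear from the formulas above, and by the previous paragraph it suffices to prove the reverse inclusion for the single model representation $\sigma:=\pi_{\mu^\infty,z}$ of Lemma~\ref{lem:pixz} with $z^{|\mu|}=\lambda_i$, whose vertex support is $[\mu^\infty]^0=T$ and which satisfies $\sigma(s_\mu)=\lambda_i\,\sigma(p_{r(\mu)})$. Here lies the main obstacle: I must show that $\sigma$ induces an isomorphism $C^*(F)/\langle \lambda_i p_{r(\mu)}-s_\mu\rangle\xrightarrow{\ \sim\ }\Kk(\ell^2([\mu^\infty]))$, equivalently that this quotient is simple. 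I would do this concretely, using $s_\mu=\lambda_i p_{r(\mu)}$ to cancel loops around $\mu$: every spanning element $s_\alpha s_\beta^*$ of $C^*(F)$ becomes, modulo the relation, a scalar multiple of $s_{\alpha'}s_{\beta'}^*$ for reduced paths $\alpha',\beta'$, and $\sigma$ carries these to the matrix units $\theta_{\alpha' w,\beta' w}$ of $\Kk(\ell^2([\mu^\infty]))$; verifying that this assignment is a bijection onto a spanning set of matrix units shows $\sigma$ is faithful on the quotient. This identification, which is exactly where the no-entrance ``cycle with trees feeding into it'' structure of $F$ is used, is the crux of the argument; granting it, $\ker\sigma=J_i$, and combining with the previous paragraph gives $\ker\pi_i=J_i$ and simultaneously completes the converse of the equivalence.
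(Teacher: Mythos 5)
Most of your proposal tracks the paper's own proof closely: the passage to $C^*(ET)\cong C^*(E)/I_H$, the Cuntz--Krieger uniqueness argument for part~(1), the identification of the corner $p_{r(\mu)}C^*(ET)p_{r(\mu)}$ with the commutative algebra $C^*(s_\mu)$, the resulting one\-/dimensionality of each $\pi_i$ on $C^*(s_\mu)$, and the proof of the equivalence $\ker\pi_1=\ker\pi_2\iff\pi_1(s_\mu)=\pi_2(s_\mu)$ via Lemma~\ref{lem:irrep kernels} combined with the Rieffel correspondence for the ideal $\mathcal{J}$ of $C^*(ET)$ generated by $p_{r(\mu)}$ --- all of this is exactly the paper's argument, and your reduction of the generator statement to the single model representation $\sigma=\pi_{\mu^\infty,z}$ is a legitimate simplification. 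The problem is the step you yourself flag as the crux: the claim that $\sigma$ induces an isomorphism of $C^*(ET)/\langle\lambda_i p_{r(\mu)}-s_\mu\rangle$ onto $\Kk(\ell^2([\mu^\infty]))$, ``equivalently that this quotient is simple,'' is false in general, so the matrix-unit argument you sketch cannot be completed.

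Counterexample: let $E$ have vertices $u,v$ and edges a loop $e$ at $u$, an edge $g$ with $s(g)=u$, $r(g)=v$, and two loops $f_1,f_2$ at $v$. Then $T:=E^0$ is a cyclic maximal tail whose entrance-free cycle is $e$, and $H=\emptyset$, so $ET=E$. But $\{u\}$ is saturated and hereditary (each loop $f_i$ entering $v$ has source $v\notin\{u\}$), so $\mathcal{J}:=\langle p_u\rangle$ is a \emph{proper} ideal: $C^*(E)/\mathcal{J}$ is the graph algebra of one vertex with two loops, i.e.\ the Cuntz algebra $\mathcal{O}_2$. Since $J'':=\langle zp_u-s_e\rangle\subseteq\mathcal{J}$ while $p_u\notin J''$ (as $J''\subseteq\ker\sigma$ and $\sigma(p_u)\neq 0$), the ideal $\mathcal{J}/J''$ of $C^*(E)/J''$ is nonzero and proper, so the quotient is not simple. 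Nor does $\sigma$ map into the compacts: the infinitely many paths $\beta g e^\infty$, with $\beta$ an arbitrary word in $f_1,f_2$, all lie in $[e^\infty]$ and have range $v$, so $\sigma(p_v)$ is an infinite-rank projection. Concretely, your reduction fails because the relation $s_e=zp_u$ only cancels traversals of the cycle: it gives no way to rewrite elements sitting over $v$, such as $p_v$ or $s_{f_1}s_{f_2}^*$, as scalar multiples of $s_{\alpha'}s_{\beta'}^*$ with $s(\alpha')=s(\beta')=u$ (the expansion $p_v=s_gs_g^*+s_{f_1}s_{f_1}^*+s_{f_2}s_{f_2}^*$ recurses forever), and it is only for paths with source \emph{on the cycle} that $\sigma(s_{\alpha'}s_{\beta'}^*)$ is a single matrix unit rather than an infinite sum of them.

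What your matrix-unit computation does prove, carried out inside $\mathcal{J}$, is that $\mathcal{J}/J''\cong\Kk(\ell^2([\mu^\infty]))$, and hence (by simplicity of the compacts) that $\ker\sigma\cap\mathcal{J}=J''$. The genuinely delicate point is the remaining one --- that $\ker\sigma$ has no part outside $\mathcal{J}$ --- and this is invisible to the concrete picture, because in general $T=T_\mu$ properly contains the saturated hereditary closure of $r(\mu)$ (equivalently, $T$ may contain maximal tails, like $\{v\}$ above, that avoid the cycle). This is precisely why the paper never attempts to compute $C^*(ET)/J''$: it compares $\ker\tilde\pi_i$ with $J''$ only through their compressions to the corner $p_{r(\mu)}\mathcal{J}p_{r(\mu)}=C^*(s_\mu)$, using the Rieffel ideal correspondence for this full corner of $\mathcal{J}$, and it is Lemma~\ref{lem:irrep kernels} --- primeness of primitive ideals --- rather than any density or simplicity computation that controls behaviour outside $\mathcal{J}$. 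Any repair of your final step needs an argument of that abstract kind; the concrete identification of the quotient that you propose to ``grant'' is simply not true.
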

\begin{proof}
We start with some setup that is needed for both statements. Let $I$ be the ideal of
$C^*(E)$ generated by $\{p_v : v \in H\}$. This $H$ is a saturated hereditary set. If
$\pi$ is an irreducible representation such that $\{v \in E^0 : \pi(p_v) \not= 0\} = T$,
then $I$ is contained in $\ker\pi$ by definition. By \cite[Remark~4.12]{RaeburnCBMS},
there is an isomorphism $C^*(E)/I \cong C^*(E \setminus EH)$ that carries $p_v + I$ to
$p_v$ for $v \in E^0 \setminus H$. Since $I \subseteq \ker\pi$, the representation $\pi$
descends to an irreducible representation of $C^*(E)/I$, and hence determines a
representation $\tilde\pi$ of $C^*(E \setminus EH)$ such that
\[
    \tilde\pi(p_v) = \pi(p_v)\quad\text{ for $v \in E^0 \setminus H$.}
\]

Now, for~\ref{it:irreps1}, if $T$ is an aperiodic maximal tail, and $\pi$ is as above,
then every cycle in $E\setminus H$ has an entrance in $E\setminus H$, and $\tilde\pi$ is a representation
of $C^*(E \setminus EH)$ such that $\tilde\pi(p_v) \not= 0$ for all $v \in (E \setminus
EH)^0$. So the Cuntz--Krieger uniqueness theorem \cite[Theorem~2.4]{RaeburnCBMS} implies
that $\tilde\pi$ is faithful. Hence $\ker\pi = I$, proving~(1).

For~\ref{it:irreps2}, consider the ideal $J$ of $C^*(E \setminus EH)$ generated by
$p_{r(\mu)}$. Then $\tilde\pi_i(J) \not= \{0\}$ for $i = 1,2$. So Lemma~\ref{lem:irrep
kernels} implies that $\tilde\pi_1$ and $\tilde\pi_2$ have the same kernel if and only if
$\ker(\tilde\pi_1) \cap J = \ker(\tilde\pi_2) \cap J$. Since $J$ is generated as an ideal
by $p_{r(\mu)}$, the corner $p_{r(\mu)} J p_{r(\mu)} = \clsp\{s_{\mu^n} s^*_{\mu^m} : m,
n \in \NN\}$ is full in $J$. Rieffel induction from a $C^*$-algebra to a full corner is
implemented by restriction of representations \cite[Proposition~3.24]{tfb}. Since Rieffel
induction carries irreducible representations to irreducible representations and induces
a bijection between primitive-ideal spaces, we deduce that each $\tilde\pi_i$ is an
irreducible representation of $C^*(s_\mu) \subseteq J$, and that
\[
\ker\tilde\pi_1 = \ker\tilde\pi_2 \qquad\Longleftrightarrow\qquad
    \ker(\tilde\pi_1) \cap p_{r(\mu)}Jp_{r(\mu)} = \ker(\tilde\pi_2) \cap p_{r(\mu)}Jp_{r(\mu)}.
\]
Since $\mu$ has no entrance, $s_\mu$ is a unitary element of $p_{r(\mu)} J p_{r(\mu)}$,
so $C^*(s_\mu) \cong C(\sigma(s_\mu))$. Since the irreducible representations of a
commutative $C^*$-algebra are 1-dimensional, we deduce that each $\tilde\pi_i$ is a
1-dimensional representation of $C^*(s_\mu) \subseteq C^*(E \setminus EH)$ and hence each
$\pi_i$ is a 1-dimensional representation of $C^*(s_\mu) \subseteq C^*(E)$. Moreover,
$\tilde\pi_1$ and $\tilde\pi_2$ have the same kernel if and only if they are implemented
by evaluation at the same point $z$ in $\sigma(s_\mu)$, and hence if and only if
$\pi_1(s_\mu) = \pi_2(s_\mu)$.

For the final statement fix $i \in \{1,2\}$. Since $I$ is contained in the ideal $J'$ generated by
$\{p_v : v \in H\} \cup \{\pi_i(s_\mu)p_{r(\mu)} - s_\mu\}$, we have $J' = \ker\pi_i$ if and only
if $\ker\tilde\pi_i$ is equal to the image $J''$ of $J'/I$ in $C^*(E \setminus EH)$. Let $z :=
\pi_i(s_\mu) \in \TT$. By definition we have $\pi_i(s_\mu - zp_{r(\mu)}) = z - z = 0$, and so $J''
\subseteq \ker(\tilde\pi_i)$. We must establish the reverse inclusion.

Write $\mu = e_1 e_2 \cdots e_m$ with each $e_m \in E^1$, and let $F$ be the directed graph with
$F^0 = E^0 \setminus H$ and $F^1 = (E^1 \setminus E^1 H) \setminus \{e_m\}$, and with range and
source maps inherited from $E$. For $v \in F^0$, define $q_v := p_v + J'' \in C^*(E \setminus
EH)/J''$ and for $f \in F^1$ define $t_f = s_f + J'' \in C^*(E \setminus EH)/J''$. Then $\{q_v,
t_f\}$ is a Cuntz--Krieger $F$-family in $C^*(E \setminus EH)/J''$. So there is a homomorphism
$\phi : C^*(F) \to C^*(E \setminus EH)/J''$ such that $\phi(p_v) = q_v$ and $\phi(s_f) = t_f$.
Since $zp_{r(\mu)}-s_\mu\in J''$,  we have
\[
s_{e_m}^* + J'' = p_{r(\mu)}s_{e_m}^* + J'' = \overline{z} s_{e_1} \dots s_{e_{m-1}} s_{e_m} s^*_{e_m} + J''.
\]
Since $\mu$ has no entrance in $E \setminus EH$, we have $s_{e_m} s^*_{e_m} = p_{r(e_m)}$. Thus
$s_{e_m} + J'' = z s_{e_1 \cdots e_{m-1}}^* + J''$ is in the range of $\phi$, and we deduce that
$\phi$ is surjective.

Since $J'' \subseteq \ker\tilde\pi_i$, the homomorphism $\tilde\pi_i$ descends to a homomorphism
$\bar\pi_i : C^*(E \setminus EH)/J'' \to \tilde\pi_i(C^*(E \setminus EH))$. We claim that $\bar\pi
\circ \phi$ is injective. To see this, observe that since $\pi_i$ is 1-dimensional on $C^*(\mu)$,
it is nonzero at $p_{r(\mu)}$, and then since $E^0$ is a maximal tail, we see that $\pi_i(p_v)
\not= 0$ for all $v \in E^0 \setminus H$. Hence $\bar\pi_i \circ \phi(p_v) \not= 0$ for all $v \in
F^0$. Since every cycle in $F$ has an entrance, the Cuntz--Krieger uniqueness theorem now implies
that $\bar\pi \circ \phi$ is injective. It follows that $\bar\pi$ is injective on the image of
$\phi$. We established above that $\phi$ is surjective, and so we deduce that $\bar\pi$ is
injective on $C^*(E \setminus EH)/J''$, which says precisely that $\ker\tilde{\pi}_i \subseteq
J''$.
\end{proof}

\begin{rmk*}
The final three paragraphs of the proof of Lemma~\ref{lem:irreps} above replace the final
paragraph of the same proof in the published version \cite{CS} of this paper, and fix a gap in the
proof which was brought to our attention by Hui Li. In the final paragraph of the proof in
\cite{CS}, we claimed that to see that $\ker(\tilde\pi_i) = J''$, it suffices to show that
$p_{r(\mu)}{\ker(\tilde\pi_i)}p_{r(\mu)} = p_{r(\mu)} J'' p_{r(\mu)}$. The error is that this
shows only that $\ker(\tilde\pi_i) \cap J = J'' \cap J$. To deduce the desired equality
$\ker(\tilde\pi_i) = J''$, we had intended that this followed from an appeal to
\cite[Lemma~3.4]{CS}, but for this, we would need to know that $J''$ is a primitive ideal. We
thank Hui for pointing out the error to us.
\end{rmk*}

\begin{proof}[Proof of Proposition~\ref{prp:same ideal}]
The final statement of Lemma~\ref{lem:pixz} implies that if $\ker\pi_{x,w} =
\ker\pi_{y,z}$, then $[x]^0 = [y]^0$. So it suffices to prove that if $[x]^0 = [y]^0$,
then
\begin{equation}\label{eq:kers vs chars}
\ker\pi_{x,w} = \ker\pi_{y,z}\text{ if and only if }w^{\Per([x]^0)} = z^{\Per([x]^0)}.
\end{equation}

For this we consider two cases. First suppose that $[x]^0$ is an aperiodic maximal tail.
Then Lemma~\ref{lem:irreps}\ref{it:irreps1} implies that each of $\ker\pi_{x,w}$ and
$\ker\pi_{y,z}$ is generated by $\{p_v : v \not\in T\}$, and in particular the two are
equal. Also, $w^{\Per([x]^0)} = w^0 = 1 = z^0 = z^{\Per([x]^0)}$, so the
equivalence~\eqref{eq:kers vs chars} holds.

Now suppose that $[x]^0$ is cyclic, and let $\mu$ be a cycle with no entrance in $[x]^0$.
We must show that $\ker\tilde\pi_{x,w} = \ker\tilde\pi_{y,z}$ if and only if $w^{|\mu|} =
z^{|\mu|}$. Since $\mu$ has no entrance, both $\pi_{x,w}(p_{r(\mu)})\ell^2([x])$ and
$\pi_{y,z}(p_{r(\mu)})\ell^2([y])$ are equal to the 1-dimensional space $\CC
\delta_{\mu^\infty}$, and we have
\[
\pi_{x,w}(s_\mu) \delta_{\mu^\infty}
    = w^{|\mu|} \delta_{\mu^\infty}\qquad\text{ and }\qquad
\pi_{y,z}(s_\mu) \delta_{\mu^\infty}
    = z^{|\mu|}\delta_{\mu^\infty}.
\]
So, identifying the image of $\pi_{x,w}(C^*(s_\mu))$ with $\CC$, we have
$\pi_{x,w}(s_\mu) = w^{|\mu|}$ and similarly $\pi_{y,z}(s_\mu) = z^{|\mu|}$. So
Lemma~\ref{lem:irreps}\ref{it:irreps2} shows that $\ker\pi_{x,w} = \ker\pi_{y,z}$ if
and only if $z^{|\mu|} = w^{|\mu|}$.
\end{proof}

We are now ready to state and prove our first main result---a catalogue of the primitive
ideals of $C^*(E)$. Proposition~\ref{prp:same ideal} says that the following definition
makes sense.

\begin{dfn}
Let $E$ be a row-finite directed graph with no sources. Suppose that $T$ is a maximal
tail in $E^0$ and that $z \in \{w^{\Per(T)} : w \in \TT\} \subseteq \TT$. We define
\[
I_{T,z} := \ker\pi_{x,w} \text{ for any $(x,w) \in E^\infty \times \TT$ such that $[x]^0 = T$ and $w^{\Per(T)} = z$.}
\]
\end{dfn}

\begin{thm}\label{thm:prim ideal catalogue}
The map $(T,z) \mapsto I_{T,z}$ is a bijection from
\[
    \{(T, w^{\Per(T)}) : T\text{ is a maximal tail, }w \in \TT\}
\]
to $\Prim C^*(E)$.
\end{thm}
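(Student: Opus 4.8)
The plan is to check, in turn, that the assignment is well defined and takes values in $\Prim C^*(E)$, that it is injective, and that it is surjective; the last of these is where the real work lies. Well-definedness is essentially already in hand: given a maximal tail $T$ and a point $z \in \{w^{\Per(T)} : w \in \TT\}$, Lemma~\ref{lem:tails and orbits} provides an $x \in E^\infty$ with $[x]^0 = T$, and surjectivity of the $\Per(T)$-th power map on $\TT$ (with the convention $\Per(T)=0$ forcing $z = 1$ when $T$ is aperiodic) provides a $w \in \TT$ with $w^{\Per(T)} = z$. Proposition~\ref{prp:same ideal} shows that $\ker\pi_{x,w}$ is independent of these choices, so $I_{T,z}$ is well defined, and since each $\pi_{x,w}$ is irreducible by Lemma~\ref{lem:pixz}, each $I_{T,z}$ is a primitive ideal. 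For injectivity I would argue directly from Proposition~\ref{prp:same ideal}: if $I_{T,z} = I_{T',z'}$, write these kernels as $\ker\pi_{x,w}$ and $\ker\pi_{y,v}$ with $[x]^0 = T$, $[y]^0 = T'$, $w^{\Per(T)} = z$ and $v^{\Per(T')} = z'$; equality of the kernels forces $[x]^0 = [y]^0$, hence $T = T'$, and then $w^{\Per(T)} = v^{\Per(T)}$, hence $z = z'$.

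The crux is surjectivity. Let $I \in \Prim C^*(E)$ and pick an irreducible representation $\pi$ with $\ker\pi = I$. By Lemma~\ref{lem:prim->tail} the set $T := \{v \in E^0 : \pi(p_v) \neq 0\}$ is a maximal tail, and I would split into the two cases of Lemma~\ref{lem:types of tails}. If $T$ is aperiodic, then applying Lemma~\ref{lem:irreps}\ref{it:irreps1} to both $\pi$ and $\pi_{x,1}$, for some $x$ with $[x]^0 = T$, shows that $\ker\pi$ and $\ker\pi_{x,1}$ are each generated by $\{p_v : v \in E^0 \setminus T\}$, whence $I = \ker\pi_{x,1} = I_{T,1}$. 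If $T$ is cyclic, let $\mu$ be a cycle with no entrance in $T$, so $T = T_\mu = [\mu^\infty]^0$ and $\Per(T) = |\mu|$. By Lemma~\ref{lem:irreps}\ref{it:irreps2}, $\pi$ restricts to a $1$-dimensional representation of $C^*(s_\mu)$; since $s_\mu$ is unitary in the corner that arises there, its spectrum lies in $\TT$, so $z_0 := \pi(s_\mu)$, being a point of that spectrum, lies in $\TT$. Choosing $w \in \TT$ with $w^{|\mu|} = z_0$ and computing exactly as in the proof of Proposition~\ref{prp:same ideal} gives $\pi_{\mu^\infty,w}(s_\mu) = w^{|\mu|} = z_0 = \pi(s_\mu)$; as both representations have tail $T$, Lemma~\ref{lem:irreps}\ref{it:irreps2} yields $\ker\pi = \ker\pi_{\mu^\infty,w} = I_{T,z_0}$ with $z_0 = w^{\Per(T)}$. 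In either case $I$ lies in the image.

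I expect surjectivity, and within it the cyclic case, to be the only genuine obstacle: this is the step where one must extract from an abstract irreducible representation the single scalar $\pi(s_\mu)$ and match it against a model representation $\pi_{\mu^\infty,w}$. All of the analytic content of that matching---the reduction to the full corner, the Rieffel-induction bijection on ideal spaces, and the commutativity of $C^*(s_\mu)$---is already packaged in Lemma~\ref{lem:irreps}\ref{it:irreps2}, so once that lemma is in place the remaining arguments are bookkeeping built on Proposition~\ref{prp:same ideal} and Lemma~\ref{lem:prim->tail}.
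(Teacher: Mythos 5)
Your proposal is correct and follows essentially the same route as the paper: primitivity of each $I_{T,z}$ via Lemma~\ref{lem:pixz}, injectivity via Proposition~\ref{prp:same ideal}, and surjectivity by taking an irreducible representation with the given kernel, extracting the maximal tail via Lemma~\ref{lem:prim->tail}, and splitting into the aperiodic and cyclic cases handled by parts \ref{it:irreps1} and \ref{it:irreps2} of Lemma~\ref{lem:irreps}. Your treatment of the cyclic case (identifying $\pi(s_\mu)$ as a point of $\TT$ and matching it against the model representation $\pi_{\mu^\infty,w}$ using the computation from Proposition~\ref{prp:same ideal}) is exactly the paper's argument, just spelled out slightly more explicitly.
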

\begin{proof}
Lemma~\ref{lem:pixz} shows that each $I_{T,z}$ is a primitive ideal.
Proposition~\ref{prp:same ideal} shows that $(T, z) \mapsto I_{T,z}$ is injective. So we
just have to show that it is surjective. Fix a primitive ideal $J$ of $C^*(E)$, let $T =
\{v : p_v \not\in J\}$, and let $\pi$ be an irreducible representation of $C^*(E)$ with
kernel $J$. Then $T$ is a maximal tail according to Lemma \ref{lem:prim->tail}.
We must show that $J$ has the form $I_{T, z}$.

If $T$ is aperiodic, then Lemma~\ref{lem:irreps}\ref{it:irreps1} shows that $J =
\ker\pi = \ker\pi_{x, 1} = I_{[x]^0, 1}$ for any $x$ such that $[x]^0 = T$.

If $T$ is cyclic, let $\mu$ be a cycle with no entrance in $T$.
Lemma~\ref{lem:irreps}\ref{it:irreps2} shows that $\pi(C^*(s_\mu))$ is one-dimensional,
so we can identify $\pi(s_\mu)$ with a nonzero complex number $z$. Since $s_\mu$ is an
isometry, $|z| = 1$. Now Lemma~\ref{lem:irreps}\ref{it:irreps2} implies that any $w \in
\TT$ with $w^{|\mu|} = z$ satisfies $\ker\pi = \ker\pi_{[\mu^\infty]^0, w} = I_{[x]^0,
z}$.
\end{proof}

\section{The closure operation}

The Jacobson, or hull-kernel, topology on $\Prim C^*(E)$ is the one determined by the
closure operation $\overline{X} = \{I \in \Prim C^*(E) : \bigcap_{J \in X} J
\subseteq I\}$. The ideals of $C^*(E)$ are in bijection with the closed subsets of $\Prim
C^*(E)$: the ideal $I_X$ corresponding to a closed subset $X$ is
\[\textstyle
I_X := \bigcap_{J \in X} J.
\]

So the first step in describing the ideals of $C^*(E)$ is to say when a primitive ideal
$I$ belongs to the closure of a set $X$ of primitive ideals. We do so with the following
theorem.

\begin{thm} \label{thm:1graph}
Let $E$ be a row-finite graph with no sources. Let $X$ be a set of pairs $(T, z)$
consisting of a maximal tail $T$ and an element $z$ of $\{w^{\Per(T)} : w \in \TT\}$.
Consider another such pair $(S, w)$. Then $\bigcap_{(T, z) \in X} I_{T, z} \subseteq
I_{S,w}$ if and only if both of the following hold:
\begin{enumerate}[label=\alph*),labelindent=\parindent,leftmargin=*]
\item\label{en:a} $S \subseteq \bigcup_{(T, z) \in X} T$, and
\item\label{en:b} if $S$ is a cyclic tail and the cycle $\mu$ with no entrance in $S$
    also has no entrance in $\bigcup_{(T,z) \in X} T$, then
    \[
        w \in \overline{\{z : (S, z) \in X\}}.
    \]
\end{enumerate}
\end{thm}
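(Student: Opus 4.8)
The plan is to reduce everything to the quotient of $C^*(E)$ by the gauge-invariant ideal determined by $U := \bigcup_{(T,z)\in X} T$, and then to isolate the circle data in a commutative corner. Throughout write $J := \bigcap_{(T,z)\in X} I_{T,z}$. First I would record the elementary support computation. Since $\{v : p_v\notin I_{T,z}\} = T$ (Lemma~\ref{lem:pixz}), we have $p_v\in J$ if and only if $v\notin U$. In particular, if $J\subseteq I_{S,w}$ and $v\in S$, then $p_v\notin I_{S,w}$ forces $p_v\notin J$, i.e.\ $v\in U$; this proves the necessity of~\ref{en:a}. Next, $U^c := E^0\setminus U = \bigcap_{(T,z)}(E^0\setminus T)$ is an intersection of saturated hereditary sets, hence saturated hereditary, and each $I_{T,z}\supseteq I(U^c)$, so $J\supseteq I(U^c)$; granting~\ref{en:a} we also have $I(U^c)\subseteq I_{S,w}$. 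Using the isomorphism $C^*(E)/I(U^c)\cong C^*(EU)$ (\cite[Remark~4.12]{RaeburnCBMS}) I would replace $E$ by $F := EU$ and reduce both directions (under the standing assumption~\ref{en:a}) to deciding when $\bar J := J/I(U^c)\subseteq\bar I_{S,w}$ in $C^*(F)$, where now $F^0 = U$.

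The heart of the matter is the case where $S$ is cyclic with no-entrance cycle $\mu$ and $\mu$ has no entrance in $U = F^0$, which is exactly the hypothesis of~\ref{en:b}. Here I would exploit that $\mu$ has no entrance in all of $F^0$, so the Cuntz--Krieger relation forces $s_{\mu_i}s_{\mu_i}^* = p_{r(\mu_i)}$ and hence $s_\mu$ is a unitary in the corner $B := p_{r(\mu)}C^*(F)p_{r(\mu)}$; moreover every path of $F$ ending at $r(\mu)$ is a power of $\mu$, so $B = C^*(s_\mu)\cong C(\TT)$ with $s_\mu$ the coordinate function. Let $K$ be the ideal of $C^*(F)$ generated by $p_{r(\mu)}$, so that $B$ is a full corner of $K$. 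Compression by $p_{r(\mu)}$ commutes with intersections, and $p_{r(\mu)}\bar I_{T,z}p_{r(\mu)} = B$ whenever $r(\mu)\notin T$, while $r(\mu)\in T$ forces $T = T_\mu = S$ by Lemma~\ref{lem:types of tails}; for such terms $\bar I_{S,z}\cap B = \{f : f(z) = 0\}$, since $\bar\pi_{S,z}$ restricts to evaluation at $z$ on $C^*(s_\mu)$ (Proposition~\ref{prp:same ideal}). Writing $Z := \{z : (S,z)\in X\}$, this gives $p_{r(\mu)}\bar J p_{r(\mu)} = \{f : f|_{\overline Z} = 0\}$ and $p_{r(\mu)}\bar I_{S,w}p_{r(\mu)} = \{f : f(w) = 0\}$. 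Since full-corner Morita equivalence gives a containment-preserving bijection between the ideals of $K$ and those of $B$, and since $\bar I_{S,w}$ is prime with $K\not\subseteq\bar I_{S,w}$ (as $p_{r(\mu)}\notin\bar I_{S,w}$), I would conclude that $\bar J\subseteq\bar I_{S,w}$ is equivalent to $p_{r(\mu)}\bar J p_{r(\mu)}\subseteq p_{r(\mu)}\bar I_{S,w}p_{r(\mu)}$, which by the Urysohn lemma holds precisely when $w\in\overline Z$. This yields both implications of~\ref{en:b} at once.

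It remains to prove, when the hypothesis of~\ref{en:b} fails, that~\ref{en:a} alone forces $\bar J\subseteq\bar I_{S,w}$. If $S$ is cyclic but its cycle $\mu$ has an entrance $f$ in $U$, then $s(f)\in U$ lies in some $T_0$ with $(T_0,z_0)\in X$, whence $r(\mu)\in T_0$ by repeated use of~(T1) (so $S\subseteq T_0$) and $\mu$ has an entrance in $T_0$. I would then check, using the generators of $\ker\bar\pi_{T_0,z_0}$ from Lemma~\ref{lem:irreps} together with the observation that the relevant projections and the no-entrance cycle of $T_0$ lie outside $S$ (otherwise that cycle would be $\mu$, contradicting the entrance in $T_0$), that $\bar I_{T_0,z_0}\subseteq\bar I_{S,w}$, so that $\bar J\subseteq\bar I_{T_0,z_0}\subseteq\bar I_{S,w}$.

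The genuinely delicate case, and the one I expect to be the main obstacle, is $S$ aperiodic (where $\bar I_{S,w} = \bar I_{S,1}$ is the gauge-invariant kernel $I_{C^*(F)}(U\setminus S)$). Here no single tail in $X$ need contain $S$, so I cannot dominate $\bar\pi_{S,1}$ by one of the $\bar\pi_{T,z}$; instead I would argue by weak containment. Given $b\in\bar J$ it suffices to show $\langle\bar\pi_{S,1}(b^*b)\delta_{x'},\delta_{x'}\rangle = 0$ for each $x'\in[x]$ with $[x]^0 = S$. Approximating $b^*b$ in norm by finitely supported $a_n$, I would choose for each $n$ a pair $(T_n,z_n)\in X$ whose tail contains the vertex $s(x'_{L_n})$ at the end of the relevant initial segment; condition~\ref{en:a} guarantees such a pair, and axiom~(T1) then places the whole segment inside $T_n$, so that a diagonal vector state of $\bar\pi_{T_n,z_n}$ reproduces $\langle\bar\pi_{S,1}(a_n)\delta_{x'},\delta_{x'}\rangle$. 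Since $b^*b$ annihilates $\bar\pi_{T_n,z_n}$, the bound $|\langle\bar\pi_{S,1}(a_n)\delta_{x'},\delta_{x'}\rangle|\le\|\bar\pi_{T_n,z_n}(a_n)\|\le\|a_n - b^*b\|\to 0$ would finish the argument. The subtlety I will have to confront carefully is that $x'$ may be eventually periodic along a cycle with an entrance in $S$, in which case the diagonal state carries genuine interference; reproducing it requires choosing $(T_n,z_n)$ so that this cycle, whose vertices again lie in $U$ by~\ref{en:a}, is realised inside $T_n$, and then verifying that the resulting phases match those coming from $w = 1$.
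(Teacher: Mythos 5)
Your opening reduction (necessity of condition~(a), passing to $C^*(EU)$ for $U=\bigcup_{(T,z)\in X}T$), your handling of a cyclic $S$ whose cycle has no entrance in $U$ (the corner $p_{r(\mu)}C^*(EU)p_{r(\mu)}\cong C(\TT)$, compression of the intersection, the Rieffel correspondence plus primeness of $I_{S,w}$), and your treatment of a cyclic $S$ whose cycle does have an entrance in $U$ (produce $T_0\supsetneq S$ in $X$ and push the generators of $I_{T_0,z_0}$ into $I_{S,w}$) are all sound, and they are essentially the paper's own steps: Lemma~\ref{lem:corner}, the primeness trick of Lemma~\ref{lem:irrep kernels}, and the ``proper superset'' subcase. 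The gap is the aperiodic case, exactly where you flagged it, and it is not a matter of phase bookkeeping: the reproduction step is false.

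Concretely, let $E^0=\{v,u,t\}$ with five edges: a loop $g$ at $v$, a loop $\ell$ at $t$, and edges $e_1,f,m$ with $s(e_1)=v$, $r(e_1)=u$, $s(f)=u$, $r(f)=v$, $s(m)=t$, $r(m)=v$. The maximal tails are $S=\{v,u\}$, which is aperiodic (both $g$ and the cycle $fe_1$ have entrances in $S$), and $T'=\{v,u,t\}$, which is cyclic with no-entrance cycle $\ell$. Take $X=\{(T',z_0)\}$ and $(S,w)=(S,1)$: condition~(a) holds, condition~(b) is vacuous, and $U=T'=E^0$, so your reduction to $F=EU$ changes nothing. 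Since the only edge with range $t$ is $\ell$, the only shift-tail class with $[y]^0=T'$ is $[\ell^\infty]$, and every member of $[\ell^\infty]$ has the form $\lambda\ell^\infty$; none is shift-tail equivalent to $g^\infty$. Now take $x=g^\infty$, so $[x]^0=S$, and the basis point $x'=g^\infty$. Then $\langle\pi_{x,1}(s_g)\delta_{g^\infty},\delta_{g^\infty}\rangle=1$, whereas $\langle\pi_{\ell^\infty,u_0}(s_g)\delta_y,\delta_y\rangle=0$ for every $y\in[\ell^\infty]$ and every phase $u_0$, because this entry is nonzero only when $gy=y$, which forces $y=g^\infty\notin[\ell^\infty]$. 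So no diagonal vector state of the only available representation reproduces your target value, no matter how long an initial segment of $x'$ you match, and the failure persists even for $z_0=1$. Your proposed repair also cannot work: the vertices of $g$ already lie inside $T_n=T'$, but that does not place $g^\infty$, or any path asymptotic to it, inside $\ell^2([y_n])$; and you cannot take $T_n=S$ because no pair $(S,\cdot)$ lies in $X$. Worse, in this example \emph{every} element of $[x]$ is eventually $g$-periodic, so the problematic points cannot be quarantined. (One can in fact approximate the state $\langle\pi_{x,1}(\cdot)\delta_{g^\infty},\delta_{g^\infty}\rangle$ by non-diagonal vector states of $\pi_{\ell^\infty,u_0}$, e.g.\ by phased windows $K^{-1/2}\sum_{N\le K}u_0^N\delta_{g^Nm\ell^\infty}$, but proving that such approximation is always possible is precisely the weak-containment assertion the theorem encodes; it is far more delicate than initial-segment matching and is not what your sketch provides.)

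The paper's proof of this case avoids representations entirely. When $S$ is aperiodic, $I_{S,1}$ is the gauge-invariant ideal generated by $\{p_v : v\notin S\}$ by Lemma~\ref{lem:irreps}\ref{it:irreps1}, so it suffices to dominate each $I_{T,z}$ by a gauge-invariant ideal still contained in $I_{S,1}$: setting $T_-:=T\setminus\{v : v\text{ lies on a cycle with no entrance in }T\}$, one has $I_{T,z}\subseteq I_{T_-}$ (once $p_{r(\mu)}$ is killed, so is $zp_{r(\mu)}-s_\mu$), and the desired containment reduces to the purely combinatorial statement $S\subseteq\bigcup_{(T,z)\in X}T_-$, which follows from condition~(a), Lemma~\ref{lem:types of tails}, and aperiodicity of $S$. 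In the example above this is just the observation that $T'_-=S$. You should replace your aperiodic case with an argument of this kind; the rest of your proof can stand.
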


We will need the following simple lemma in the proof Theorem~\ref{thm:1graph}, and at a
number of other points later in the paper.

\begin{lem}\label{lem:corner}
Let $E$ be a row-finite graph with no sources, let $H$ be a saturated hereditary subset
of $C^*(E)$ and let $\mu$ be a cycle with no entrance in $E^0 \setminus H$. Let
$I_H$ be the ideal of $C^*(E)$ generated by $\{p_v : v \in H\}$. Then there is an
isomorphism
\[
(p_{r(\mu)} C^*(E) p_{r(\mu)}) / (p_{r(\mu)} I_H p_{r(\mu)}) \cong p_{r(\mu)} C^*(E \setminus EH) p_{r(\mu)}
\]
carrying $s_\mu + p_{r(\mu)} I_H p_{r(\mu)}$ to $s_\mu$, and there is an isomorphism of
$p_{r(\mu)} C^*(E \setminus EH) p_{r(\mu)}$ onto $C(\TT)$ carrying $s_\mu$ to the
generating monomial function $z \mapsto z$.
\end{lem}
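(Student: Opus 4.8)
The plan is to factor the two claimed isomorphisms and treat them in turn. Write $A = C^*(E)$, $I = I_H$, $p = p_{r(\mu)}$, and let $q : A \to A/I$ be the quotient map. Since $\mu$ is a cycle with no entrance in $E^0 \setminus H$, each $r(\mu_i)$ lies in $E^0 \setminus H$, so $p_{r(\mu)}$ survives in the quotient. By \cite[Remark~4.12]{RaeburnCBMS} there is an isomorphism $A/I \cong C^*(E \setminus EH)$ carrying $q(p_v)$ to $p_v$ and $q(s_e)$ to $s_e$ for $v \in E^0 \setminus H$ and $e \in E^1 \setminus E^1 H$; in particular it carries $q(s_\mu)$ to $s_\mu$.

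For the first isomorphism I would invoke the elementary fact that compression by a projection commutes with passing to a quotient: for any projection $p$ in a $C^*$-algebra $A$ and any ideal $I$ one has $pAp \cap I = pIp$, so the restriction of $q$ to $pAp$ induces an isomorphism $(pAp)/(pIp) \cong q(p)\,(A/I)\,q(p)$. Combined with the identification above, the right-hand side becomes $p_{r(\mu)} C^*(E \setminus EH) p_{r(\mu)}$, and the induced map sends $s_\mu + p_{r(\mu)} I_H p_{r(\mu)}$ to $s_\mu$, as required. (Here $s_\mu = p_{r(\mu)} s_\mu p_{r(\mu)}$ because $\mu$ is a cycle, so $r(\mu) = s(\mu)$.)

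The second isomorphism is where the real work lies. First I would show that $s_\mu$ is a unitary in the corner $p_{r(\mu)} C^*(E \setminus EH) p_{r(\mu)}$, with unit $p_{r(\mu)}$. The relation $s_\mu^* s_\mu = p_{s(\mu)} = p_{r(\mu)}$ is automatic; for $s_\mu s_\mu^* = p_{r(\mu)}$ I would use that, since $\mu$ has no entrance in the subgraph, each $r(\mu_i)$ receives exactly the single edge $\mu_i$ in $E \setminus EH$, so the Cuntz--Krieger relation gives $p_{r(\mu_i)} = s_{\mu_i} s_{\mu_i}^*$, and a telescoping computation then yields $s_\mu s_\mu^* = p_{r(\mu)}$. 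Next I would identify the corner with $C^*(s_\mu)$. Since $C^*(E \setminus EH) = \clsp\{s_\alpha s_\beta^* : s(\alpha) = s(\beta)\}$, the corner is the closed span of those $s_\alpha s_\beta^*$ with $r(\alpha) = r(\beta) = r(\mu)$; the no-entrance condition forces any such $\alpha, \beta$ to be initial segments of $\mu^\infty$, and the same telescoping identity collapses each $s_\alpha s_\beta^*$ to a monomial $s_\mu^k (s_\mu^*)^l$. Hence $p_{r(\mu)} C^*(E \setminus EH) p_{r(\mu)} = C^*(s_\mu)$, which is commutative and, as $s_\mu$ is unitary, isomorphic to $C(\sigma(s_\mu))$ with $s_\mu$ corresponding to the inclusion $\sigma(s_\mu) \hookrightarrow \CC$.

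It remains to identify $\sigma(s_\mu)$ with all of $\TT$. Being unitary, $\sigma(s_\mu) \subseteq \TT$ is nonempty and closed. To see it is everything I would invoke the gauge action: for each $z \in \TT$ the gauge automorphism $\gamma_z$ of \cite[Proposition~2.1]{RaeburnCBMS} fixes $p_{r(\mu)}$, hence restricts to an automorphism of the corner, and sends $s_\mu$ to $z^{|\mu|} s_\mu$. Since automorphisms preserve spectra, $\sigma(s_\mu) = z^{|\mu|}\sigma(s_\mu)$ for every $z \in \TT$, so $\sigma(s_\mu)$ is invariant under every rotation of $\TT$ and must equal $\TT$. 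Thus $C^*(s_\mu) \cong C(\TT)$ with $s_\mu$ mapping to the monomial $z \mapsto z$, completing the proof. I expect the main obstacle to be the combinatorial bookkeeping in the second isomorphism---verifying that the no-entrance hypothesis really does force every spanning element of the corner to reduce to a power of $s_\mu$ or $s_\mu^*$---rather than anything analytically delicate.
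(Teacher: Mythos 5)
Your proposal is correct and follows essentially the same route as the paper's proof: identify $C^*(E)/I_H \cong C^*(E \setminus EH)$ via \cite[Remark~4.12]{RaeburnCBMS}, restrict to the corner, note that the no-entrance condition makes $s_\mu$ a unitary there, and use rotation-invariance of $\sigma(s_\mu)$ under the gauge action to conclude that the spectrum is all of $\TT$. The only difference is that you spell out two steps the paper leaves implicit---that $pAp \cap I_H = pI_Hp$, so that compression by $p_{r(\mu)}$ commutes with the quotient, and that the no-entrance condition collapses every spanning element $s_\alpha s_\beta^*$ of the corner to a power of $s_\mu$ or $s_\mu^*$, so the corner really is $C^*(s_\mu)$---both of which are accurate and are in fact needed for the claimed isomorphism to be of the whole corner rather than just of $C^*(s_\mu)$.
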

\begin{proof}
Remark~4.12 of \cite{RaeburnCBMS} shows that there is an isomorphism $C^*(E)/I_H \cong
C^*(E \setminus EH)$ that carries $s_e + I_H$ to $s_e$ if $e \in E^1 \setminus
E^1H$ and to zero otherwise. This restricts to the desired isomorphism $p_{r(\mu)}
C^*(E) p_{r(\mu)}/ p_{r(\mu)} I_H p_{r(\mu)} \cong p_{r(\mu)} C^*(E \setminus EH)
p_{r(\mu)}$. The element $s_\mu \in p_{r(\mu)} C^*(E \setminus EH) p_{r(\mu)}$ satisfies
$s_\mu^* s_\mu  = p_{r(\mu)} = s_\mu s^*_\mu$ because $\mu$ has no entrance in
$E^0 \setminus H$. So it suffices to show that the spectrum of $s_\mu$
calculated in $p_{r(\mu)} C^*(E \setminus EH) p_{r(\mu)}$ is $\TT$. To see this, observe
that the gauge action $\gamma$ satisfies $\gamma_w(s_\mu) = w^{|\mu|}(s_\mu)$. So for
$\lambda, w \in \TT$, $\lambda p_{r(\mu)} - s_\mu$ is invertible if and only if
$\gamma_w(\lambda p_{r(\mu)} - s_\mu) = w^{|\mu|}(w^{-|\mu|}\lambda p_{r(\mu)} - s_\mu)$.
That is, $\sigma(\mu)$ is invariant under rotation by elements of the form $w^{|\mu|}$,
which is all of $\TT$. Since the spectrum is nonempty, it follows that it is the whole
circle.
\end{proof}

\begin{proof}[Proof of Theorem~\ref{thm:1graph}]
We first prove the ``if" direction. So suppose that~(\ref{en:a} and~(\ref{en:b} are
satisfied. We consider two cases. First suppose that $S$ is an aperiodic tail. Then
$\Per(S) = \{0\}$, and so $w = 1$. For each maximal tail $T$ of $E$, let
\[
T_- := T \setminus \{v : v\text{ lies on a cycle with no entrance in }T\},
\]
and let $I_{T_-}$ be the ideal generated by $\{p_v : v \not\in T_-\}$.
If $T$ is a cyclic maximal tail and $\mu$ is a cycle with no entrance in $T$, and if $z
\in \{w^{\Per(T)} : w \in \TT\}$, then Lemma~\ref{lem:irreps}\ref{it:irreps2} shows
that $I_{T, z}$ is generated by $\{p_v : v \not\in T\} \cup \{zp_{r(\mu)} - s_\mu\}$. So
$I_{T, z} \subseteq I_{T_-}$. So it suffices to show that
\[\textstyle
\bigcap_{(T, z) \in X} I_{T_-} \subseteq I_{S,1}.
\]
For this it suffices to show that $\bigcup_{(T, z) \in X} T_- \supseteq S$. We fix $v \in
E^0 \setminus \bigcup_{(T, z) \in X} T_-$ and show that $v \not\in S$. If $v \not\in T$
for all $(T, z) \in X$, then it follows from~(\ref{en:a} that $v \not\in S$. So we may
assume that $v \in \big(\bigcup_{(T, z) \in X} T\big) \setminus \big(\bigcup_{(T, Z) \in
X} T_-\big)$. In particular, there exist pairs $(T, z) \in X$ such that $v \in T$. Fix
any such pair. Since $v \not\in T_-$, it must lie in a cycle $\mu$ in $T$ with no
entrance in $T$. Property~(T1) shows that $\mu$ is contained entirely in $T$, and then
Lemma~\ref{lem:types of tails} then gives $T = [\mu^\infty]^0 = r(E^* v)$. So $\mu$ has
no entrance in $r(E^* v)$, and the only pairs $(T, z) \in X$ with $v \in T$ satisfy $T =
r(E^* v)$. Thus $\mu$ has no entrance in $\bigcup_{(T, z) \in X} T$. Since $S \subseteq
\bigcup_{(T, z) \in X} T$, and every cycle in $S$ has an entrance in $S$, we deduce that
$\mu$ does not lie in $S$ and hence $v \not\in S$ as required.

Now suppose that $S$ is cyclic and $\mu$ is a cycle with no entrance in $S$. Let $V$ be
the set of vertices on $\mu$. Lemma~\ref{lem:types of tails} gives $S = \{r(\alpha) :
s(\alpha) \in V\}$. Since $S \subseteq \bigcup_{(T, z) \in X} T$, there exists $(T, z)
\in X$ with $r(\mu) \in T$. Since $T$ satisfies~(T1), we deduce that the cycle $\mu$ lies
in the subgraph $ET$ of $E$. So there exists $(T, z) \in X$ such that $V
\subseteq T$, and then $S \subseteq T$ because $S = \{r(\alpha) : s(\alpha) \in V\}$ and
$T$ satisfies~(T1). So it suffices to show that
\[\textstyle
\bigcap_{(T, z) \in X, S \subseteq T} I_{T, z} \subseteq I_{S, \omega}.
\]
For this, first suppose that there exists $(T,z) \in X$ such that $T$ is a proper
superset of $S$; say $v \in T \setminus S$. Since $S = \{r(\alpha) : s(\alpha) \in V\}$,
we see that $v E^* V = \emptyset$, and hence $v E^* S = \emptyset$. So there exists $w
\in T \setminus S$ such that $V E^* w$ and $v E^* w$ are both nonempty. Hence
\[
T \supseteq \{r(\alpha) : s(\alpha) = w\} \supseteq \{r(\alpha) : s(\alpha) \in V\} = S.
\]
If $T$ is a cyclic tail, the cycle with no entrance that it contains lies outside of $S$,
so the final statement of Lemma~\ref{lem:irreps}\ref{it:irreps2} shows that all the
generators of $I_{T, z}$ belong to $I_{S, w}$; and if $T$ is aperiodic, then all the
generators of $I_{T, z}$ belong to $I_{S, w}$ by Lemma~\ref{lem:irreps}\ref{it:irreps1}.
In either case, we conclude that $I_{T, z} \subseteq I_{S, w}$, and hence $\bigcap_{(T,
z) \in X, S \subseteq T} I_{T, z} \subseteq I_{S, \omega}$.

So it now suffices to show that $\bigcap_{z : (S, z) \in X} I_{S, z} \subseteq I_{S, w}$.
Let $I_S$ be the ideal generated by $\{p_v : v \not\in S\}$. Then each $I_{S,z}$ contains
$I_S$, as does $I_{S,w}$, so we need only show that in the quotient $C^*(E)/I_S \cong
C^*(E S)$, the intersection of the images $J_z$ of the $I_{S, z}$ is contained in $J_w$.
Each $J_z$ is generated by $zp_{r(\mu)} - s_\mu$ and is therefore contained in the ideal
generated by $p_{r(\mu)}$, and similarly for $J_w$. Since the ideal generated by
$p_{r(\mu)}$ is Morita equivalent to the corner determined by $p_{r(\mu)}$, it suffices
to show that $\bigcap_{(S, z) \in X} p_{r(\mu)} J_z p_{r(\mu)} \subseteq p_{r(\mu)} J_w
p_{r(\mu)}$. The isomorphism $p_{r(\mu)} C^*(E S) p_{r(\mu)} \cong C(\TT)$ of
Lemma~\ref{lem:corner} carries each $p_{r(\mu)} J_z p_{r(\mu)}$ to $\{f \in C(\TT) : f(z)
= 0\}$. So $\bigcap_{(S, z) \in X} p_{r(\mu)} J_z p_{r(\mu)}$ is carried to $\big\{f \in
C(\TT) : f \equiv 0\text{ on } \overline{\{z : (S, z) \in X\}}\big\}$, and in particular
is contained in the image of $p_{r(\mu)} J_w p_{r(\mu)}$.

We now prove the ``only if" direction. To do this, we prove the contrapositive. So we
first suppose that~(\ref{en:a} does not hold. Then there is some $v \in S \setminus
\bigcup_{(T, z)} T$. This implies that $p_v \in I_{(T, z)}$ for all $(T, z)$, but $p_v
\not\in I_{S,w}$, and so $\bigcap_{(T, z)} I_{T, zi} \not\subseteq I_{S, w}$ as required.

Now suppose that $S \subseteq \bigcup_{(T, z)} T$, that $\mu$ is a cycle with no
entrance in $S$ and that $\mu$ also has no entrance in $\bigcup_{(T,z) \in X} T$, and
that $w \not\in \overline{\{z : (S, z) \in X\}}$. As above, $S = \{r(\alpha) : s(\alpha)
= r(\mu)\}$, and since $\mu$ has no entrance in any $T$, for each $(T, z)$ we have either
$T = S$ or $r(\mu) \not\in T$. Whenever $r(\mu) \not\in T$, we have $p_{r(\mu)} \in
I_{(T, z)}$, and so $\bigcap_{(T, z)} p_{r(\mu)} I_{T, z} p_{r(\mu)} = \bigcap_{(S, z)
\in X} p_{r(\mu)} I_{S, z} p_{r(\mu)}$. Once again taking quotients by $I_S$, it suffices
to show that
$$\bigcap_{(S, z) \in X} p_{r(\mu)}J_z p_{r(\mu)} \not\subseteq p_{r(\mu)}J_w
p_{r(\mu)}.$$ Since $w \not\in \overline{\{z : (S, z) \in X\}}$, there exists $f \in
C(\TT)$ such that $f(w) = 0$ and $f(z) = 1$ whenever $(S, z) \in X$. Let $g = 1-f \in
C(\TT)$. Then the images of the elements $f$ and $g$ belong to $\bigcap_{(S, z) \in X}
p_{r(\mu)} J_z p_{r(\mu)}$ and $p_{r(\mu)} J_w p_{r(\mu)}$ respectively. Their sum is the
identity element $p_{r(\mu)}$, which does not belong to $J_w$. Thus
\[
p_{r(\mu)} J_w p_{r(\mu)} + \bigcap_{(S, z) \in X} p_{r(\mu)} J_z p_{r(\mu)} \not= J_w.
\]
Consequently, $\bigcap_{(S, z) \in X} p_{r(\mu)}J_z p_{r(\mu)} \not\subseteq
p_{r(\mu)}J_w p_{r(\mu)}$.
\end{proof}

\section{The ideals of $C^*(E)$}

We use Theorem~\ref{thm:1graph} above to describe all the ideals of $C^*(E)$. We index
them by what we call ideal pairs for $E$. To define these, given a saturated hereditary
set $H$ of $E^0$, we will write $\CH$ for the set
\[
\CH := \{\mu : \mu\text{ is a cycle with no entrance in } E^0 \setminus H\}.
\]
An \emph{ideal pair} for $E$ is then a pair $(H, U)$ where $H$ is a saturated hereditary
set, and $U$ is a function assigning to each $\mu \in \CH$ a proper open subset $U(\mu)$
of $\TT$, with the property that $U(\mu) = U(\nu)$ whenever $[\mu^\infty] =
[\nu^\infty]$.

Observe that if the maximal tail $E^0 \setminus H$ is aperiodic, so that $\CH =
\emptyset$, then there is exactly one ideal pair of the form $(H, U)$: the function $U$
is the unique (trivial) function from the empty set to the collection of proper open
subsets of $\TT$.

To see how to obtain an ideal of $C^*(E)$ from an ideal pair, we need to do a little bit
of background work.

For each open subset $U \subseteq \TT$, we fix a function $h_U \in C(\TT)$ such that
\[
\{z \in \TT : h_U(z) \not= 0\} = U.
\]
For example, we could take
\[
h_U(z) := \inf\{|z - w| : w\notin U\}.
\]

Let $\pi : C(\TT) \to \ell^2(\ZZ)$ be the faithful representation that carries the
generating monomial $z \mapsto z$ to the bilateral shift operator $U : e_n \mapsto
e_{n+1}$. The classical theory of Toeplitz operators says that if $P_+ : \ell^2(\ZZ) \to
\ell^2(\NN)$ denotes the orthogonal projection onto the Hardy space $\clsp\{e_n : n \ge
0\}$, then there is an isomorphism $\rho$ from $P_+ \pi(C(\TT)) P_+$ to the Toeplitz
algebra $\Tt \subseteq \ell^2(\NN)$ generated by the unilateral shift operator $S$, such
that if $q : \Tt \to C(\TT)$ is the quotient map that divides out the ideal of compact
operators, then $q(\rho(P_+ \pi(f) P_+)) = f$ for every $f \in C(\TT)$.

If $H \subseteq E^0$ is saturated and hereditary, then for each $\mu \in \CH$, we have
$s_\mu s^*_\mu \le p_{r(\mu)} = s^*_\mu s_\mu$, with equality precisely if $\mu$ has
no entrance in $E^0$. So if $\mu$ has no entrance in $E^0$, then
$s_\mu$ is unitary in $p_{r(\mu)} C^*(E) p_{r(\mu)}$, and we can apply the functional
calculus in the corner to define a nonzero element $h_U(s_\mu) \in C^*(E)$. If $\mu$
has an entrance in $E^0$, then $s_\mu s^*_\mu < s^*_\mu s_\mu$, so Coburn's
theorem \cite{Coburn} gives an isomorphism $\psi : \Tt \cong C^*(s_\mu)$ that carries $S$
to $s_\mu$.

Using the preceding paragraph, given an ideal pair $(H,U)$ and given $\mu \in \CH$, we
obtain an element $\tau^U_\mu \in C^*(s_\mu) \subseteq p_{r(\mu)} C^*(E) p_{r(\mu)}$
given by
\begin{equation*}
\tau^U_\mu :=
    \begin{cases}
        h_{U(\mu)}(s_\mu) &\text{ if $\mu$ has no entrance in $E^0$} \\
        \psi(\rho(P_+ \pi(h_{U(\mu)}) P_+)) &\text{ otherwise.}
    \end{cases}
\end{equation*}

\begin{thm}\label{thm:all ideals}
Let $E$ be a row-finite graph with no sources. Let $\Ii_E$ denote the set of all ideal
pairs for $E$. For each $(H,U) \in \Ii_E$, let $J_{H,U}$ be the ideal of $C^*(E)$
generated by
\[
\{p_v : v \in H\} \cup \{\tau^U_\mu : \mu \in \CH\}.
\]
\begin{enumerate}[label=(\arabic*),labelindent=\parindent,leftmargin=*]
\item\label{it:pair->ideal} The map $(H,U) \mapsto J_{H,U}$ is a bijection of $\Ii_E$
    onto the collection of all closed 2-sided ideals of $C^*(E)$.
\item\label{it:ideal->pair} Given an ideal $I$ of $C^*(E)$, let $H_I := \{v \in E^0 :
    p_v \in I\}$, and for $\mu \in \Cc(H_I)$, let $U_I(\mu) = \TT \setminus
    \spec_{(p_{r(\mu)} + I) (C^*(E)/I) (p_{r(\mu)} + I)} (s_\mu + I)$. Then
    $(H_I,U_I)$ is an ideal pair and $I = J_{H_I,U_I}$.
\end{enumerate}
\end{thm}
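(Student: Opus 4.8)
The plan is to route everything through the Jacobson correspondence between ideals and closed subsets of $\Prim C^*(E)$. Since every ideal of a $C^*$-algebra is the intersection of the primitive ideals containing it, I will, for each ideal pair $(H,U)$, identify the set
\[
X_{H,U} := \{(S,w) : J_{H,U}\subseteq I_{S,w}\}
\]
of (pairs indexing) primitive ideals containing $J_{H,U}$, note that $J_{H,U}=\bigcap_{(S,w)\in X_{H,U}}I_{S,w}$, and then prove that $(H,U)\mapsto X_{H,U}$ is a bijection from $\Ii_E$ onto the closed subsets of $\Prim C^*(E)$ (identified via Theorem~\ref{thm:prim ideal catalogue} with sets of admissible pairs). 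Composing with the bijection between closed subsets and ideals gives \ref{it:pair->ideal}, and chasing the recovery formulas backwards gives \ref{it:ideal->pair}. The point of this routing is that the awkward implication ``$J_{H,U}\subseteq I_{S,w}\Rightarrow I\subseteq I_{S,w}$'' never has to be proved by hand: it is absorbed into the bijection with closed sets, where surjectivity is supplied by the already-proved Theorem~\ref{thm:1graph}.

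The computational core is the containment criterion: for an ideal pair $(H,U)$ and an admissible pair $(S,w)$,
\[
J_{H,U}\subseteq I_{S,w}\ \Longleftrightarrow\ S\subseteq E^0\setminus H\ \text{ and }\ \Big(S=T_\mu\text{ for some }\mu\in\CH\ \Longrightarrow\ w\notin U(\mu)\Big).
\]
I will prove this by testing the generators of $J_{H,U}$ against $I_{S,w}$. The projections $p_v$, $v\in H$, lie in $I_{S,w}$ exactly when $S\subseteq E^0\setminus H$. For $\mu\in\CH$ we have $\tau^U_\mu\in p_{r(\mu)}C^*(E)p_{r(\mu)}$, so $\tau^U_\mu\in I_{S,w}$ automatically when $r(\mu)\notin S$ (then $p_{r(\mu)}\in I_{S,w}$). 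When $r(\mu)\in S\subseteq E^0\setminus H$, the no-entrance hypothesis with Lemma~\ref{lem:types of tails} forces $S=T_\mu$; writing $I_{S,w}=\ker\pi_{x,v}$ with $v^{|\mu|}=w$, the space $\pi_{x,v}(p_{r(\mu)})\ell^2([x])$ is one-dimensional, spanned by $\delta_{\mu^\infty}$, and $\pi_{x,v}(s_\mu)$ acts there as multiplication by $w$. Thus the corner representation is evaluation at $w$: in the no-entrance-in-$E^0$ case $\tau^U_\mu=h_{U(\mu)}(s_\mu)\mapsto h_{U(\mu)}(w)$, while in the Toeplitz case the rank-one defect dies in a one-dimensional representation, so the compact-quotient map again yields $h_{U(\mu)}(w)$; either way $\tau^U_\mu\in I_{S,w}\iff h_{U(\mu)}(w)=0\iff w\notin U(\mu)$. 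I expect this identification of the corner representation---in particular making the Toeplitz case collapse correctly---to be the main obstacle.

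For injectivity I recover $(H,U)$ from $J_{H,U}$. First, $\{v:p_v\in J_{H,U}\}=H$: the inclusion $\supseteq$ is immediate, and for the reverse I take $v\notin H$, build (using that $E^0\setminus H$ satisfies (T1),(T2) and $E$ has no sources) an infinite path out of $v$ inside $E^0\setminus H$, whose shift-tail class is by Lemma~\ref{lem:tails and orbits} a maximal tail $S\ni v$ with $S\subseteq E^0\setminus H$; choosing $w\notin U(\mu)$ when $S=T_\mu$ is cyclic---possible precisely because $U(\mu)$ is \emph{proper}---gives $(S,w)\in X_{H,U}$ with $p_v\notin I_{S,w}$, so $p_v\notin J_{H,U}$. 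Second, $U(\mu)=\TT\setminus\spec_{B}(s_\mu+J_{H,U})$ in the corner $B=(p_{r(\mu)}+J_{H,U})(C^*(E)/J_{H,U})(p_{r(\mu)}+J_{H,U})$: the spectrum contains each $w\notin U(\mu)$ because $(T_\mu,w)\in X_{H,U}$ supplies a character of $B$ sending $s_\mu+J_{H,U}$ to $w$, while conversely $h_{U(\mu)}(s_\mu+J_{H,U})=0$ in $B$ (as $\tau^U_\mu\in J_{H,U}$), forcing the spectrum into the zero set $\TT\setminus U(\mu)$ of $h_{U(\mu)}$. For surjectivity onto closed sets I invoke Theorem~\ref{thm:1graph}: given a closed $X$, set $H=E^0\setminus\bigcup_{(T,z)\in X}T$ and $U(\mu)=\TT\setminus\overline{\{z:(T_\mu,z)\in X\}}$, and observe that the criterion above describes $X_{H,U}$ by exactly the two conditions that Theorem~\ref{thm:1graph} attaches to $\overline X$---using that $\mu\in\CH$ means precisely that $\mu$ has no entrance in $\bigcup_{(T,z)\in X}T=E^0\setminus H$---so $X_{H,U}=\overline X=X$.

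To close the argument I must check that the $(H,U)$ produced in the surjectivity step is a genuine ideal pair ($H$ is saturated hereditary because a union of maximal tails satisfies (T1),(T2); $U(\mu)$ is open and proper, properness being $\{z:(T_\mu,z)\in X\}\neq\emptyset$, which holds since $r(\mu)\in\bigcup T$ forces some $(T_\mu,z)\in X$; compatibility under $[\mu^\infty]=[\nu^\infty]$ is clear), and dually that $(H_I,U_I)$ is an ideal pair: here $s_\mu+I$ is unitary in its corner by Lemma~\ref{lem:corner} since $I\supseteq I_H$, so $\spec(s_\mu+I)$ is a nonempty closed subset of $\TT$ and $U_I(\mu)$ is proper and open. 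Combining the two bijections gives \ref{it:pair->ideal}; for \ref{it:ideal->pair} one takes the unique ideal pair with $J_{H,U}=I$, whence $X_{H,U}=\{(S,w):I\subseteq I_{S,w}\}$, and the recovery formulas above read off $H=\{v:p_v\in I\}=H_I$ and $U(\mu)=\TT\setminus\spec(s_\mu+I)=U_I(\mu)$, so $I=J_{H_I,U_I}$. The technical heart, as flagged, is the corner computation of the second paragraph together with these ideal-pair verifications.
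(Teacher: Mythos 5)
Your proposal is correct, and it rests on the same three pillars as the paper's proof---a containment criterion characterising when $J_{H,U}\subseteq I_{S,w}$ (this is precisely the paper's Lemma~\ref{lem:containment}, and your proof of it, identifying the corner restriction of $\pi_{x,v}$ as evaluation at $w$ and letting the character kill the compacts in the Toeplitz case, is essentially the paper's), the correspondence between ideals and closed subsets of $\Prim C^*(E)$, and Theorem~\ref{thm:1graph}---but it assembles them in a genuinely different order. Where the paper proves injectivity of $(H,U)\mapsto J_{H,U}$ by a two-case separation argument (if $H_1\neq H_2$ it builds an infinite path with vertices in $H_1\setminus H_2$, producing a primitive ideal whose hull contains $J_{H_2,U_2}$ but not $J_{H_1,U_1}$; if $H_1=H_2$ it picks $z\in U_1(\mu)\setminus U_2(\mu)$), you instead prove the recovery formulas $\{v:p_v\in J_{H,U}\}=H$ and $U(\mu)=\TT\setminus\spec(s_\mu+J_{H,U})$ outright, which yields injectivity and most of part~\ref{it:ideal->pair} in one stroke. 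And where the paper obtains surjectivity through part~\ref{it:ideal->pair}---showing $J_{H_I,U_I}\subseteq I$ generator by generator, and then $I\subseteq J_{H_I,U_I}$ by translating the spectral definition of $U_I$ into the closure condition of Theorem~\ref{thm:1graph} via the function-vanishing argument in the corner---you work at the level of closed sets: given a closed set $X$ you read off $(H,U)$ combinatorially and observe that your containment criterion reproduces exactly the two conditions of Theorem~\ref{thm:1graph}, so that $X_{H,U}=\overline{X}=X$. Your organisation buys a cleaner logical structure: there is no case analysis, the awkward implication ``$J_{H,U}\subseteq I_{S,w}\Rightarrow I\subseteq I_{S,w}$'' is indeed absorbed into the bijection with closed sets, and the comparison between spectra and the closures $\overline{\{z:(S,z)\in X\}}$ is localised in the recovery formulas rather than entangled with surjectivity. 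What the paper's organisation buys is a standalone Lemma~\ref{lem:containment} and the explicit verification that the generators of $J_{H_I,U_I}$ lie in $I$; the former is reused in the proof of Theorem~\ref{thm:lattice}, so on your route the containment criterion would in any case need to be extracted as a separate statement rather than left inline.
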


Before proving the theorem, we need the following lemma.

\begin{lem}\label{lem:containment}
Let $E$ be a row-finite directed graph with no sources. Let $(H, U)$ be an ideal pair for
$E$, let $T$ be a maximal tail of $E$ and take $z \in \{w^{\Per(T)} : w \in \TT\}$. Then
$J_{H,U} \subseteq I_{T, z}$ if and only if both of the following hold:
\begin{enumerate}[label=\alph*),labelindent=\parindent,leftmargin=*]
	\item $H \subseteq E^0 \setminus T$; and
	\item if $T$ is cyclic and the cycle $\mu$ with no entrance in $T$
belongs to $\CH$, then $z \not\in U(\mu)$.
\end{enumerate}
In particular, we have $\{v : p_v \in
J_{H,U}\} = H$.
\end{lem}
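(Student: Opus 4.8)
The plan is to reduce the containment $J_{H,U} \subseteq I_{T,z}$ to a question that can be answered by the already-established machinery, chiefly Lemma~\ref{lem:irreps}, Lemma~\ref{lem:corner}, and the explicit generators of the two ideals. First I would fix an irreducible representation $\pi_{x,w}$ with $[x]^0 = T$ and $w^{\Per(T)} = z$, so that $I_{T,z} = \ker\pi_{x,w}$. The containment $J_{H,U} \subseteq I_{T,z}$ then holds exactly when $\pi_{x,w}$ annihilates every generator of $J_{H,U}$, namely every $p_v$ with $v \in H$ and every $\tau^U_\mu$ with $\mu \in \CH$. I would treat these two families of generators separately, since conditions~(a) and~(b) correspond to them respectively.

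For the generators $p_v$ ($v \in H$): by the final statement of Lemma~\ref{lem:pixz}, $\pi_{x,w}(p_v) = 0$ if and only if $v \notin [x]^0 = T$. Hence $\pi_{x,w}$ kills all of $\{p_v : v \in H\}$ precisely when $H \subseteq E^0 \setminus T$, which is condition~(a). I would establish this equivalence first and note that, under condition~(a), $H$ is contained in the saturated hereditary set $E^0 \setminus T$, so all the $p_v$-generators are dealt with. It remains to decide when, given~(a), the remaining generators $\tau^U_\mu$ also lie in $\ker\pi_{x,w}$.

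For the generators $\tau^U_\mu$ ($\mu \in \CH$): here I would split according to whether the cycle $\mu$ meets $T$. If $r(\mu) \notin T$, then $\pi_{x,w}(p_{r(\mu)}) = 0$, and since $\tau^U_\mu \in p_{r(\mu)} C^*(E) p_{r(\mu)}$, it is automatically annihilated; so such $\mu$ impose no constraint. If $r(\mu) \in T$, then since $\mu$ has no entrance in $E^0 \setminus H \supseteq T$ and $T$ is a maximal tail, Lemma~\ref{lem:types of tails} forces $T$ to be cyclic with $\mu$ (up to cyclic permutation) its unique entrance-free cycle, so that $T = T_\mu$ and $\Per(T) = |\mu|$. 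In this case $\pi_{x,w}(p_{r(\mu)})\ell^2([x])$ is the one-dimensional space $\CC\delta_{\mu^\infty}$, and as computed in the proof of Proposition~\ref{prp:same ideal} we have $\pi_{x,w}(s_\mu) = w^{|\mu|} = z$. I would then use the construction of $\tau^U_\mu$: identifying $\pi_{x,w}(C^*(s_\mu))$ with $\CC$ via evaluation at the point $z \in \sigma(s_\mu) = \TT$, the element $\tau^U_\mu$ is carried to $h_{U(\mu)}(z)$ in the no-entrance case (directly by the functional calculus) and to $q(\rho(P_+\pi(h_{U(\mu)})P_+))(z) = h_{U(\mu)}(z)$ in the Coburn case, the point being that the Toeplitz symbol map $q$ recovers $h_{U(\mu)}$. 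By the defining property of $h_{U(\mu)}$, this value vanishes exactly when $z \notin U(\mu)$. Thus $\pi_{x,w}(\tau^U_\mu) = 0$ iff $z \notin U(\mu)$, which is condition~(b).

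The main obstacle I anticipate is the bookkeeping in the Coburn (entrance) case: showing that evaluating $\tau^U_\mu = \psi(\rho(P_+\pi(h_{U(\mu)})P_+))$ under the one-dimensional representation $\pi_{x,w}\!\restriction_{C^*(s_\mu)}$ really yields $h_{U(\mu)}(z)$ and not something involving the compact part. This rests on the fact that the composite of $\pi_{x,w}$ with the isomorphism $\psi$ factors through the Calkin-type quotient $q : \Tt \to C(\TT)$ — equivalently, that the one-dimensional representation of $C^*(s_\mu)$ coming from a point of $\TT = \sigma(s_\mu)$ annihilates the compact operators in $\Tt \cong C^*(s_\mu)$ — so that only the symbol $q(\rho(P_+\pi(h_{U(\mu)})P_+)) = h_{U(\mu)}$ survives. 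Once these equivalences are in place, $J_{H,U} \subseteq I_{T,z}$ holds iff both~(a) and~(b) hold, completing the proof; the final assertion $\{v : p_v \in J_{H,U}\} = H$ then follows from the ``in particular'' clause, since applying~(a) with the quotient description shows $p_v \in J_{H,U}$ forces $v \in H$, while the reverse inclusion is immediate from the generating set.
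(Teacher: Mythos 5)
Your proof of the equivalence itself is correct and is essentially the paper's argument: fix $\pi_{x,w}$ with $[x]^0=T$ and $w^{\Per(T)}=z$, observe that $J_{H,U}\subseteq I_{T,z}=\ker\pi_{x,w}$ exactly when every generator is annihilated, match the generators $p_v$ with condition~(a), and for $\mu\in\CH$ with $r(\mu)\in T$ reduce to whether $h_{U(\mu)}(z)=0$. The one methodological difference is how you evaluate $\pi_{x,w}(\tau^U_\mu)$: you argue directly that the restriction of $\pi_{x,w}$ to $C^*(s_\mu)$ is a character, that characters of $\Tt$ annihilate $\Kk$ (true, since $\Kk$ is simple and has no characters) and hence factor through the symbol map, so the value is $h_{U(\mu)}(z)$. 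The paper instead passes to the quotient by the ideal $I_H$ generated by $\{p_v : v\in H\}$ (which lies in $\ker\pi_{x,w}$) and applies Lemma~\ref{lem:corner}, in which $s_\mu$ becomes unitary with full spectrum; this handles the Coburn case and the unitary case uniformly and avoids the Toeplitz bookkeeping you flag as the main obstacle. Both routes work. (Minor slip: in the Coburn case $\sigma(s_\mu)$ is the closed disc, not $\TT$; what you mean is that the character sends $s_\mu$ to a point $z$ of $\TT$.)

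The genuine gap is in the final statement $\{v : p_v\in J_{H,U}\}=H$, which you dismiss in one sentence. Saying that ``applying~(a) with the quotient description shows $p_v\in J_{H,U}$ forces $v\in H$'' is not a proof: condition~(a) tells you which primitive ideals contain $J_{H,U}$, but to conclude $p_v\notin J_{H,U}$ for a given $v\notin H$ you must actually exhibit a primitive ideal that contains $J_{H,U}$ and whose tail contains $v$, and this is where the real work lies. The paper's argument is: for each $\mu\in\CH$ choose $z_\mu\in\TT\setminus U(\mu)$---possible precisely because each $U(\mu)$ is a \emph{proper} subset of $\TT$---so that the ``if'' direction gives $J_{H,U}\subseteq I_{[\mu^\infty]^0,z_\mu}$; and for each $v\in E^0\setminus H$ lying in no $[\mu^\infty]^0$, use saturation of $H$ to build an infinite path $x^v$ in $E^0\setminus H$ with range $v$ (heredity of $H$ then gives $[x^v]^0\subseteq E^0\setminus H$), so that $J_{H,U}\subseteq I_{[x^v]^0,1}$, condition~(b) being vacuous for such tails. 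Intersecting these primitive ideals shows no $p_v$ with $v\notin H$ lies in $J_{H,U}$. Note that properness of the $U(\mu)$ is not a technicality here: if $U(\mu)=\TT$ were allowed and $\mu$ had no entrance in $E^0$, then $h_{U(\mu)}(s_\mu)$ would be invertible in $p_{r(\mu)}C^*(E)p_{r(\mu)}$ and $J_{H,U}$ would contain $p_{r(\mu)}$ with $r(\mu)\notin H$, so the statement would fail. Your write-up never invokes properness, which is a sign that this part of the argument is missing.
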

\begin{proof}
For the ``if" direction, fix $x \in E^\infty$ such that $T = [x]^0$ and $w \in \TT$ such
that $w^{\Per(T)} = z$. We just have to show that $\pi_{x,w}$ annihilates all the
generators of $J_{H,U}$. For this, first fix $v \in H$. Then the final statement of
Lemma~\ref{lem:pixz} shows that $p_v \in \ker\pi_{x,w}$. Now fix $\mu \in \CH$. If
$r(\mu) \not\in T$, then $\pi_{x,w}(p_{r(\mu)}) = 0$ as above and then since $\tau^U_\mu
\in p_{r(\mu)} C^*(E) p_{r(\mu)}$, it follows that $\pi_{x, w}(\tau^U_\mu) = 0$. So
suppose that $r(\mu) \in T$. Since $\mu$ has no entrance in $E^0 \setminus H$ and since
$T \subseteq E^0 \setminus H$, the cycle $\mu$ has no entrance in $T$. So $T$ is a cyclic
maximal tail, and $[x]^0 = [\mu^\infty]^0$ by Lemma~\ref{lem:types of tails}. We then
have $z \not\in U(\mu)$ by hypothesis. The ideal $I_H$ generated by $\{p_v : v \in H\}$
is contained in $\ker(\pi_{x,w})$, so $\pi_{x,w}$ descends to a representation
$\tilde\pi_{x,w}$ of $C^*(E)/I_H$. Lemma~\ref{lem:corner} shows that $p_{r(\mu)} C^*(E)
p_{r(\mu)} / p_{r(\mu)} I p_{r(\mu)} \cong C(\TT)$, and this isomorphism carries the
restriction of $\tilde\pi_{x,w}$ to the 1-dimensional representation $\epsilon_z$ given
by evaluation at $z$. The isomorphism of Lemma~\ref{lem:corner} also carries $\tau^U_\mu
+ p_{r(\mu)} I p_{r(\mu)}$ to $h_{U(\mu)}$. Since $z \not\in U(\mu)$, we have
$\epsilon_z(h_{U(\mu)}) = 0$, and so $\pi_{x,w}(\tau^U_\mu) = 0$. So all of the
generators of $J_{H,U}$ belong to $\ker\pi_{x,w}$ as required.

For the ``only if" implication, we prove the contrapositive. Again fix $x \in
E^\infty$ such that $T = [x]^0$ and $w \in \TT$ such that $w^{\Per(T)} = z$, so that
$I_{T,z} = \ker\pi_{x,w}$. First suppose that $H \not\subseteq E^0 \setminus T$; say $v
\in T \cap H$. Then $p_v \in J_{H,U}$ by definition, but $p_v \not\in \ker\pi_{x,w}$ by
the final statement of Lemma~\ref{lem:pixz}, giving $J_{H,U} \not\subseteq
\ker\pi_{x,w}$. Now suppose that $H \subseteq E^0 \setminus T$, that $T$ is cyclic and
that the cycle $\mu$ with no entrance in $T$ belongs to $\CH$, but that $z \in U(\mu)$.
Arguing as in the preceding paragraph, we see that $\pi_{x,w}(h_{U(\mu)}(z)p_{r(\mu)} - \tau^U_\mu) =
0$. Since $\tau^U_\mu \in J_{H,U}$, we deduce that $p_{r(\mu)} \in J_{H,U} +
\ker\pi_{x,w}$. Since $p_{r(\mu)} \not\in \ker\pi_{x,w}$ by Lemma~\ref{lem:pixz}, we
deduce that $J_{H,U} \not\subseteq \ker\pi_{x,w}$.

For the final statement, observe that $H \subseteq \{v : p_v\in J_{H,U}\}$ by
definition. For the reverse containment, recall that by definition of an ideal pair, each
$U(\mu)$ is a proper subset of $\TT$. So for each $\mu \in \CH$, we can choose $z_\mu \in
\TT \setminus U(\mu)$. By the preceding paragraphs, we have $J_{H,U} \subseteq
I_{[\mu]^0, z_\mu}$ for each $\mu \in \CH$. For each $v \in E^0 \setminus H$ that does
not belong to $[\mu^\infty]^0$ for any $\mu \in \CH$, we can choose an infinite path
$x^v$ in $E^0 \setminus H$ with $r(x^v_1) = v$. This $x^v \not\in [\mu^\infty]$ for $\mu
\in \CH$ because $v$ does not belong to any $[\mu^\infty]^0$. So each $[x^v]^0$ is a
maximal tail contained in the complement of $H$ and the preceding paragraphs show that
$J_{H,U} \subseteq I_{[x^v]^0, 1}$. We now have
\[\textstyle
J_{H,U} \subseteq \Big(\bigcap_\mu I_{[\mu]^0, z_\mu}\Big) \cap
    \Big(\bigcap_v I_{[x^v]^0, 1}\Big).
\]
By construction, the right-hand side does not contain $p_v$ for any $v \not\in H$, and so
we deduce that $v\not\in H$ implies $p_v \not\in J_{H,U}$ as required.
\end{proof}

\begin{proof}[Proof of Theorem~\ref{thm:all ideals}]
To prove the theorem, it suffices to show that the assignment $(H,U) \mapsto J_{H,U}$ is
injective, and then prove statement~\ref{it:ideal->pair}.

The general theory of $C^*$-algebras says that every ideal of a $C^*$-algebra $A$ is
equal to the intersection of all of the primitive ideals that contain it. By definition,
the topology on $\Prim(A)$ is the weakest one in which $\{I \in \Prim(A) : J \subseteq
I\}$ is closed for every ideal $J$ of $A$, and the map which sends $J$ to this closed
subset of $\Prim(A)$ is a bijection. So to prove that $(H,U) \mapsto J_{H,U}$ is
injective, we just have to show that the closed sets $Y_{H,U} := \{I \in \Prim C^*(E) :
J_{H,U} \subseteq I\}$ are distinct for distinct pairs $(H,U)$.

By Lemma~\ref{lem:containment}, we have
\begin{align*}
Y_{H,U} = \{I_{T,z} : {}&T \subseteq E^0 \setminus H\text{ is a maximal tail, and} \\
                &\text{ if $T$ is cyclic and the cycle $\mu$ with no entrance in $T$}\\
                &\text{ also has no entrance in $H$, then $z \not\in U(\mu)$}\}.
\end{align*}

Suppose that $(H_1, U_1)$ and $(H_2, U_2)$ are distinct ideal pairs of $E$. We consider
two cases. First suppose that $H_1 \not= H_2$. Without loss of generality, there exists
$v \in H_1 \setminus H_2$. Since $H_2$ is saturated, there exists $e_1 \in vE^1$ such
that $s(e_1) \not\in H_2$. Since $H_1$ is hereditary, we have $s(e) \in H_1$. Repeating
this argument we obtain edges $e_i \in s(e_{i-1}) E^1$ with $s(e_i) \in H_1 \setminus
H_2$, and hence an infinite path $x$ lying in $(E \setminus EH_1) \setminus (E \setminus
EH_2)$. Now $[x]^0$ is a maximal tail contained in $H_1 \setminus H_2$. If $[x]^0$ is an
aperiodic tail or is a cyclic tail such that the cycle with no entrance in $[x]^0$ has an
entrance in $E \setminus EH_2$, we set $z = 1$. If $[x]^0  = [\mu^\infty]^0$ for some
$\mu \in \Cc(H_2)$, we choose any $z \in \TT \setminus U_2(\mu)$. Then
Lemma~\ref{lem:containment} shows that $I_{[x]^0, z} \in Y_{H_2, U_2} \setminus Y_{H_1,
U_1}$.

Now suppose that $H_1 = H_2$. Then $U_1 \not= U_2$, so we can find $\mu \in \Cc(H_1) =
\Cc(H_2)$ such that $U_1(\mu) \not= U_2(\mu)$. Again without loss of generality, we can
assume that there exists $z \in U_1(\mu) \setminus U_2(\mu)$, and then we have
$I_{[\mu^\infty]^0, z} \in Y_{H_2, U_2} \setminus Y_{H_1, U_1}$. This completes the proof
that the $Y_{H,U}$ are distinct.

It remains to prove~\ref{it:ideal->pair}. Given an ideal $I$, the set $H := H_I$ is a
saturated hereditary set by \cite[Lemma~4.5]{RaeburnCBMS}. Since the ideal $I_H$
generated by $\{p_v : v \in H\}$ is contained in $I$, Lemma~\ref{lem:corner} shows that
$s_\mu + I$ is unitary in $(p_{r(\mu)} + I) C^*(E)/I (p_{r(\mu)} + I)$ for each $\mu \in
C(H)$; so its spectrum is a closed subset of $\TT$, showing that $U_I(\mu)$ is an open
subset of $\TT$. If $\mu,\nu \in C(H)$ with $[\mu^\infty] = [\nu^\infty]$, then
$\mu^\infty = \alpha\nu^\infty$ for some initial segment $\alpha$ of $\mu^\infty$. The
Cuntz--Krieger relations show that $s^*_\alpha s_\mu s_\alpha + I = s_\nu + I$ and
$s_\alpha s_\nu s^*_\alpha + I = s_\mu + I$; so conjugation by $s_\alpha + I$ gives an
isomorphism $C^*(s_\mu) + I \cong C^*(s_\nu) + I$, and in particular
\[
\spec_{(p_{r(\mu)} + I) (C^*(E)/I) (p_{r(\mu)} + I)} (s_\mu + I)
    = \spec_{(p_{r(\nu)} + I) (C^*(E)/I) (p_{r(\nu)} + I)} (s_\nu + I),
\]
giving $U_I(\mu) = U_I(\nu)$. So $(H, U)$ is an ideal pair.

To see that $I = J_{H_I, U_I}$, we first check the containment $\supseteq$. For this, it
suffices to show that every generator of $J_{H_I, U_I}$ belongs to $I$. We have $p_v \in
I$ for all $v \in H_I$ by definition. Fix $\mu \in \Cc(H_I)$; we must show that
$\tau^{U_I}_\mu \in I$. For this, let $I_H$ be the ideal of $C^*(E)$ generated by $\{p_v
: v \in H\}$. Since $I_H$ is contained in both $I$ and $J_{H_I, U_I}$ we just have to
show that $J_{H_I, U_I}/ I_H$ is contained in $I/I_H$. For this, let $\pi : p_{r(\mu)}
C^*(E) p_{r(\mu)} \to C(\TT)$ be the composition of the isomorphism of
Lemma~\ref{lem:corner} with the canonical surjection $p_{r(\mu)} C^*(E) p_{r(\mu)} \to
(p_{r(\mu)} + I_H) (C^*(E)/I_H) (p_{r(\mu)} + I_H)$. Then $\pi(\tau^{U_I}_\mu) =
h_{U_I(\mu)}$ vanishes on $\TT \setminus U_I(\mu)$, which is $\spec_{(p_{r(\mu)} + I)
(C^*(E)/I) (p_{r(\mu)} + I)} (s_\mu + I)$.  Since the quotient map by the image of $I$
under $\pi$ is given by restriction of functions to $\spec_{(p_{r(\mu)} + I) (C^*(E)/I)
(p_{r(\mu)} + I)} (s_\mu + I)$, it follows that $\tau^{U_I}_\mu + I_H \in I/I_H$ as
required.

For the reverse containment, recall that every ideal of $C^*(E)$ is the intersection of
the primitive ideals that contain it, so it suffices to show that if $I_{S,w} \in Y_{H_I,
U_I}$, then $I \subseteq I_{S, w}$. Fix $I_{S, w} \in Y_{H_I, U_I}$. We can express $I$ as
an intersection of primitive ideals and therefore, by Theorem~\ref{thm:prim ideal
catalogue}, we have $I = \bigcap_{(T, z) \in X} I_{T, z}$ for some set $X$ of pairs
consisting of a maximal tail $T$ and an element $z \in \{u^{\Per(T)} : u \in \TT\}$. We
then have
\[\textstyle
v \in H_I \iff p_v \in I \iff p_v \in \bigcap_{(T, z) \in X} I_{T, z}
    \iff v \in \bigcap_{(T, z) \in X} E^0 \setminus T,
\]
and we deduce that $H_I = E^0 \setminus \bigcup_{(T, z) \in X} T$. Since $I_{S, w} \in
Y_{H_I, U_I}$, we have $S \subseteq E^0 \setminus H_I = \bigcup_{(T, z) \in X} T$. So if
$S$ is an aperiodic tail, or is a cyclic tail such that the cycle $\mu$ with no entrance
in $S$ has an entrance in $\bigcup_{(T, z) \in X} T$, then Theorem~\ref{thm:1graph}
immediately gives $I = \bigcap_{(T, z) \in X} I_{T, z} \subseteq I_{S, w}$. So suppose
that $S$ is cyclic, and the cycle $\mu$ with no entrance in $S$ has no entrance in
$\bigcup_{(T, z) \in X} T$. Again using that $I_{S, w} \in Y_{H_I, U_I}$, we see that $w
\not\in U_I(\mu)$. Hence $w \in \spec_{(p_{r(\mu)} + I) (C^*(E)/I) (p_{r(\mu)} + I)}
(s_\mu + I)$. So if $\pi : p_{r(\mu)} C^*(E) p_{r(\mu)} \to C(\TT)$ is the map described
in the preceding paragraph, we have $f(w) = 0$ for all $f$ in $\pi(I) = \bigcap_{(S, z)
\in X} \pi(I_{S, z})$. Each $\pi(I_{S, z})$ is the set of functions that vanishes at $z$,
so we deduce that every function vanishing at every $z$ for which $(S, z) \in X$ also
vanishes at $w$; that is $w \in \overline{\{z : (S, z) \in X\}}$. Now
Theorem~\ref{thm:1graph} again gives $I =  \bigcap_{(T, z) \in X} I_{T, z} \subseteq
I_{S, w}$.
\end{proof}

\begin{rmk}\label{rmk:which primitive}
To see where the primitive ideals of $C^*(E)$ fit into the catalogue of
Theorem~\ref{thm:all ideals}, first let us establish the convention that if $\CH =
\emptyset$, then $\emptyset$ denotes the unique (trivial) function from $\CH$ to the
collection of open subsets of $\TT$, and that if $\CH$ is a singleton, then $\check{z}$
denotes the function on $\CH$ that assigns the value $\TT \setminus \{z\}$ to the unique
element of $\CH$. Now if $T$ is a maximal tail and $z \in \{w^{\Per(T)} : w \in \TT\}$,
then Lemma~\ref{lem:irreps} and the definition of the ideals $J_{H, U}$ show that
\[
    I_{T, z} = \begin{cases}
        J_{E^0 \setminus T, \emptyset} &\text{ if $T$ is aperiodic}\\
        J_{E^0 \setminus T, \check{z}} &\text{ if $T$ is cyclic.}
    \end{cases}
\]
\end{rmk}

\begin{rmk}
The ideal $J_{H,U}$ is gauge invariant (i.e., $\gamma_z(J_{H,U})=J_{H,U}$ for every $z\in\mathbb{T}$) if and only if $U(\mu)=\emptyset$ for every $\mu\in\CH$, in which case $J_{H,U}=I_H$. Thus, we recover from Theorem~\ref{thm:all ideals} the description of the gauge invariant ideals of $C^*(E)$ presented in \cite[Theorem 4.1]{BHRS}.
\end{rmk}

\section{The lattice structure}

To finish off the description of the lattice of ideals of $C^*(E)$, we describe the
complete-lattice structure in terms of ideal pairs.

We define $\preceq$ on the set $\Ii_E$ of ideal pairs for a row-finite graph $E$ with no
sources by
\begin{align*}
(H_1, U_1) \preceq (H_2, U_2)\qquad\Longleftrightarrow\qquad
    H_1 \subseteq H_2\text{ and }& U_1(\mu) \subseteq U_2(\mu)\\
    &\text{ for all } \mu \in \Cc(H_1) \cap \Cc(H_2).
\end{align*}
In the following, given $X \subseteq \TT$, we write $\operatorname{Int}(X)$ for the
interior of $X$.

\begin{thm}\label{thm:lattice}
Let $E$ be a row-finite graph with no sources.
\begin{enumerate}[label=(\arabic*),labelindent=\parindent,leftmargin=*]
\item\label{it:containment} Given ideal pairs $(H_1, U_1)$ and $(H_2, U_2)$ for $E$,
    we have $J_{H_1, U_1} \subseteq J_{H_2, U_2}$ if and only if $(H_1, U_1) \preceq
    (H_2, U_2)$.
\item\label{it:meet} Given a set $K \subseteq \Ii_E$ of ideal pairs for $E$, we have
    $\bigcap_{(H,U) \in K} J_{H,U} = J_{H_K, U_K}$ where $H_K = \bigcap_{(H, U) \in
    K} H$, and $U_K(\mu) = \operatorname{Int}\big(\bigcap_{(H,U) \in K, \mu \in \CH}
    U(\mu)\big)$.
\item\label{it:join} Fix a set $K \subseteq \Ii_E$ of ideal pairs of $E$. Let $A$ be
    the saturated hereditary closure of $\bigcup_{(H,U) \in K} H$. Let $B = \{r(\mu)
    : \mu \in \Cc(A)\text{ and } \bigcup_{(H,U) \in K, \mu \in \CH} U(\mu) = \TT\}$.
    Let $H^K$ be the saturated hereditary closure of $A \cup B$ in $E^0$, and for
    each $\mu \in \Cc(H^K)$, let $U^K(\mu) = \bigcup_{(H,U) \in K, \mu \in \CH}
    U(\mu)$. Then $\clsp\big(\bigcup_{(H, U)\in K} J_{H, U}\big) = J_{H^K, U^K}$.
\end{enumerate}
\end{thm}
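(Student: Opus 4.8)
\textbf{Proof plan for Theorem~\ref{thm:lattice}\ref{it:join}.}
The plan is to exploit the fact, established in the proof of Theorem~\ref{thm:all ideals}, that each ideal is the intersection of the primitive ideals containing it, and that these primitive ideals are catalogued by the closed sets $Y_{H,U} = \{I_{T,z} : J_{H,U} \subseteq I_{T,z}\}$ described explicitly via Lemma~\ref{lem:containment}. Since the closure of the span of a union of ideals is the smallest ideal containing all of them, its hull $Y_{H^K, U^K}$ must equal $\bigcap_{(H,U) \in K} Y_{H,U}$; so the whole theorem reduces to verifying that the candidate pair $(H^K, U^K)$ is a genuine ideal pair and that $Y_{H^K, U^K} = \bigcap_{(H,U)\in K} Y_{H,U}$. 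Equivalently, using Lemma~\ref{lem:containment}, I must show that a primitive ideal $I_{T,z}$ contains $J_{H,U}$ for \emph{every} $(H,U) \in K$ if and only if it contains the single ideal $J_{H^K, U^K}$.

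First I would check that $(H^K, U^K)$ is an ideal pair: that $H^K$ is saturated and hereditary is automatic from taking the saturated hereditary closure, and I must confirm that each $U^K(\mu)$ is a \emph{proper} open subset of $\TT$ for $\mu \in \Cc(H^K)$. Openness is clear since $U^K(\mu)$ is a union of open sets. Properness is the crux of why we enlarge $A$ to include $B$: if $\mu \in \Cc(A)$ and the union $\bigcup_{\mu \in \CH} U(\mu)$ were all of $\TT$, then $r(\mu)$ gets absorbed into $H^K$ via $B$, so $\mu$ is no longer a cycle with no entrance in $E^0 \setminus H^K$ and hence $\mu \notin \Cc(H^K)$. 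Thus for the surviving cycles $\mu \in \Cc(H^K)$ the union $U^K(\mu)$ is automatically proper, and I would also need to note the well-definedness condition $U^K(\mu) = U^K(\nu)$ when $[\mu^\infty] = [\nu^\infty]$, which is inherited from the corresponding property of each $U$.

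Next I would unwind the two containment directions using Lemma~\ref{lem:containment}. For $\supseteq$ in the hull (that is, $Y_{H^K,U^K} \subseteq \bigcap Y_{H,U}$), I must show $J_{H,U} \subseteq J_{H^K, U^K}$ for each $(H,U) \in K$, i.e.\ $(H,U) \preceq (H^K, U^K)$ by part~\ref{it:containment}: here $H \subseteq A \subseteq H^K$ is clear, and $U(\mu) \subseteq U^K(\mu)$ for $\mu \in \Cc(H)\cap\Cc(H^K)$ holds because $U^K(\mu)$ is a union including the term $U(\mu)$. For the reverse containment $\bigcap Y_{H,U} \subseteq Y_{H^K, U^K}$, I fix a primitive ideal $I_{T,z}$ containing every $J_{H,U}$ and verify the two conditions of Lemma~\ref{lem:containment} for $J_{H^K, U^K} \subseteq I_{T,z}$: condition~(a), $H^K \subseteq E^0 \setminus T$, follows because $T$ avoids every $H$ and $B$, and I would argue that $T$ must also avoid the saturated hereditary closure; condition~(b) reduces, when $T$ is cyclic with distinguished cycle $\mu \in \Cc(H^K)$, to showing $z \notin U^K(\mu) = \bigcup U(\mu)$, which holds precisely because $z \notin U(\mu)$ for each $(H,U)$ with $\mu \in \CH$ by the hypothesis that $I_{T,z} \supseteq J_{H,U}$.

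\textbf{Anticipated obstacle.} The main subtlety is the interaction between the saturation/hereditary-closure operations and the maximal-tail condition~(a) of Lemma~\ref{lem:containment}. Specifically, I expect the delicate step to be showing that a tail $T$ disjoint from each $H$ (and from $B$) is necessarily disjoint from the full saturated hereditary closure $H^K$ of $A \cup B$; this requires showing that $E^0 \setminus T$, being itself saturated and hereditary (as the complement of a maximal tail), must contain the saturated hereditary closure of any subset it contains, so that $A \cup B \subseteq E^0\setminus T$ forces $H^K \subseteq E^0 \setminus T$. Handling the bookkeeping around $B$---verifying that absorbing the $r(\mu)$ with $U^K(\mu) = \TT$ into $H^K$ does not accidentally sweep in a vertex lying on some surviving cycle and thereby disturb the $\Cc(H^K)$ indexing---is where I would concentrate the care.
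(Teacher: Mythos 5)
Your strategy for part~\ref{it:join} is genuinely different from the paper's: you reduce the join formula to an equality of hulls, $Y_{H^K,U^K} = \bigcap_{(H,U)\in K} Y_{H,U}$, using the fact that every ideal is the intersection of the primitive ideals containing it, and you then test membership in each hull via Lemma~\ref{lem:containment}. The paper instead never leaves the poset of ideal pairs: by part~\ref{it:containment} the map $(H,U)\mapsto J_{H,U}$ is an order isomorphism, so it suffices to show that $(H^K,U^K)$ is the \emph{least} upper bound of $K$, and the only analytic input is a compactness/partition-of-unity argument in $C(\TT)$ showing that any upper bound $(H'',U'')$ must absorb $p_{r(\mu)}$ for every $r(\mu)\in B$. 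Your route is viable and would even avoid that compactness step, because on the hull side one only ever deals with a single point $z\in\TT$ at a time. Your treatment of properness of $U^K$, of the forward containment via part~\ref{it:containment}, and of condition~b) of Lemma~\ref{lem:containment} are all essentially right.

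However, there is a genuine gap at precisely the step your plan dismisses as automatic. In the reverse containment you assert that condition~a) of Lemma~\ref{lem:containment} "follows because $T$ avoids every $H$ and $B$", and you locate the delicate point in passing to the saturated hereditary closure. This is backwards. The closure step is trivial: $E^0\setminus T$ is saturated and hereditary (its complement $T$ satisfies (T1)~and~(T2)), so it contains the saturated hereditary closure of any set it contains. What is \emph{not} trivial---and what you never prove---is that $T$ avoids $B$ at all. Note that $B$ is disjoint from $A$, since each $\mu\in\Cc(A)$ has all its vertices outside $A$; so knowing that $T$ misses every $H$, and hence misses $A$, gives no information whatsoever about $T\cap B$. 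The missing argument is: if $r(\mu)\in T\cap B$ with $\mu\in\Cc(A)$, then since $T\subseteq E^0\setminus A$ and $\mu$ has no entrance in $E^0\setminus A$, properties (T1)~and~(T2) force every vertex of $\mu$ into $T$ and force $\mu$ to have no entrance in $T$; hence $T = T_\mu$ is cyclic with distinguished cycle $\mu$ by Lemma~\ref{lem:types of tails}, and then condition~b) of Lemma~\ref{lem:containment}, applied to $J_{H,U}\subseteq I_{T,z}$ for each $(H,U)\in K$ with $\mu\in\CH$, gives $z\notin U(\mu)$ for all such pairs, so that $z\notin\bigcup_{(H,U)\in K,\ \mu\in\CH}U(\mu) = \TT$, a contradiction. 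This argument is the hull-side counterpart of the paper's partition-of-unity step, and without it your proof of $\bigcap_{(H,U)\in K} Y_{H,U}\subseteq Y_{H^K,U^K}$ does not go through. Finally, note that your proposal addresses only part~\ref{it:join} and invokes part~\ref{it:containment} in the forward direction, so parts \ref{it:containment}~and~\ref{it:meet} still require independent proofs.
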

\begin{proof}
\ref{it:containment}: First suppose that $(H_1, U_1) \preceq (H_2, U_2)$. We show that
every generator of $J_{H_1, U_1}$ belongs to $J_{H_2, U_2}$. For each $v \in H_1$ we have
$v \in H_2$ and therefore $p_v \in J_{H_2, U_2}$. Suppose that $\mu \in \Cc(H_1)$. If
$r(\mu) \in H_2$, then $p_{r(\mu)} \in J_{H_2, U_2}$ and so $\tau^{U_1}_\mu \in
p_{r(\mu)} C^*(E) p_{r(\mu)}$ belongs to $J_{H_2, U_2}$ as well. So we may suppose that
$r(\mu) \not\in H_2$. Since $H_1 \subseteq H_2$ and since $\mu$ has no entrance in $E^0
\setminus H_1$, it cannot have an entrance in $E^0 \setminus H_2$, so it belongs to
$\Cc(H_2)$. The ideal $I_{H_1}$ generated by $\{p_v : v \in H_1\}$ is contained in both
$J_{H_1, U_1}$ and $J_{H_2, U_2}$. By Lemma~\ref{lem:corner}, we have $(p_{r(\mu)} + I_{H_1})
(C^*(E)/I_{H_1}) (p_{r(\mu)} + I_{H_1}) \cong C(\TT)$ and this isomorphism carries
$\tau^{U_1}_\mu$ to $h_{U_1(\mu)}$ and carries the image of $J_{H_2, U_2}$ to $\{f \in
C(\TT) : f^{-1}(\CC \setminus \{0\}) \subseteq U_2(\mu)\}$. Since $U_1(\mu) \subseteq
U_2(\mu)$, it follows that the image of $\tau^{U_1}_\mu$ in the corner $(p_{r(\mu)} +
I_{H_1}) (C^*(E)/I_{H_1}) (p_{r(\mu)} + I_{H_1})$ belongs to the image of $J_{H_2, U_2}$, and
therefore $\tau^{U_1}_\mu + I_{H_1} \subseteq J_{H_2, U_2}$, giving $\tau^{U_1}_\mu \in
J_{H_2, U_2}$.

Now suppose that $J_{H_1, U_1} \subseteq J_{H_2, U_2}$. The final statement of
Lemma~\ref{lem:containment} shows that $H_1 \subseteq H_2$, so we must show that whenever
$\mu \in \Cc(H_1) \cap \Cc(H_2)$, we have $U_1(\mu) \subseteq U_2(\mu)$.
Theorem~\ref{thm:all ideals}~\ref{it:ideal->pair} shows that
\[
U_i(\mu) = \TT \setminus \spec_{(p_{r(\mu)} + J_{H_i, U_i})
    (C^*(E)/J_{H_i, U_i}) (p_{r(\mu)} + J_{H_i, U_i})} (s_\mu + J_{H_i, U_i}).
\]
Since $J_{H_1, U_1} \subseteq J_{H_2, U_2}$, there is a homomorphism $q : C^*(E)/J_{H_1,
U_1} \to C^*(E)/J_{H_2, U_2}$ that carries $s_\mu + J_{H_1, U_1}$ to $s_\mu + J_{H_2,
U_2}$. In particular, $q$ carries $p_{r(\mu)} + J_{H_1, U_1}$ to $p_{r(\mu)} + J_{H_2,
U_2}$, and so induces a unital homomorphism between the corners determined by these
projections. Since unital homomorphisms decrease spectra, we obtain
\begin{align*}
\spec&_{(p_{r(\mu)} + J_{H_2, U_2}) (C^*(E)/J_{H_2, U_2}) (p_{r(\mu)} + J_{H_2, U_2})} (s_\mu + J_{H_2, U_2}) \\
    &\qquad\subseteq \spec_{(p_{r(\mu)} + J_{H_1, U_1}) (C^*(E)/J_{H_1, U_1}) (p_{r(\mu)} + J_{H_1, U_1})} (s_\mu + J_{H_1, U_1}),
\end{align*}
and hence $U_1(\mu) \subseteq U_2(\mu)$.

\ref{it:meet}: The ideal $\bigcap_{(H, U) \in K} J_{H, U}$ is the largest ideal that is
contained in $J_{H,U}$ for every $(H,U)$ in $K$. Since the map $(H,U) \to J_{H,U}$ is a
bijection carrying $\preceq$ to $\subseteq$, it suffices to show that $(H_K, U_K) \preceq
(H,U)$ for all $(H,U) \in K$, and is maximal with respect to $\preceq$ amongst pairs
$(H'', U'')$ satisfying $(H'', U'') \preceq (H,U)$ for all $(H,U) \in K$. The pair $(H_K,
U_K)$ satisfies $(H_K, U_K) \preceq (H, U)$ for all $(H,U) \in K$ by definition of $H_K$
and $U_K$. Suppose that $(H'', U'') \preceq (H,U)$. Then $H'' \subseteq H$ for all $(H,U)
\in K$, and hence $H'' \subseteq H_K$; and if $\mu \in \Cc(H'') \cap \Cc(H_K)$, and if
$(H,U) \in K$ satisfies $\mu \in \CH$, then $U''(\mu) \subseteq U(\mu)$ because $(H'',
U'') \preceq (H, U)$. So $U''(\mu)$ is an open subset of $\bigcap_{(H,U) \in K, \mu \in
\CH} U(\mu)$, and therefore belongs to $\operatorname{Int}\big(\bigcap_{(H,U) \in K, \mu
\in \CH} U(\mu)\big) = U_K$.

\ref{it:join}: The ideal $\clsp\big(\bigcup_{(H, U) \in K} J_{H, U}\big)$ is the
smallest ideal containing $J_{H,U}$ for every $(H,U)$ in $K$. So as above it suffices to
show that $(H,U) \preceq (H^K, U^K)$ for all $(H,U) \in K$, and that $(H^K, U^K)$ is
minimal with respect to $\preceq$ amongst pairs $(H'', U'')$ satisfying $(H,U) \preceq
(H'', U'')$ for all $(H,U) \in K$. The pair $(H^K, U^K)$ satisfies $(H, U) \preceq (H^K,
U^K)$ for all $(H,U) \in K$ by construction. Suppose that $(H'', U'')$ is another ideal
pair satisfying $(H,U) \preceq (H'', U'')$ for all $(H,U) \in K$. We just have to show
that $(H^K, U^K) \preceq (H'', U'')$. We have $H \subseteq H''$ for every $(H,U) \in K$,
and since $H''$ is saturated and hereditary, it follows that $A \subseteq H''$. If $v \in
B$, then there exists $\mu \in \Cc(A)$ such that $\bigcup_{(H,U) \in K, \mu \in \CH}
U(\mu) = \TT$, and then by compactness of $\TT$, there are finitely many pairs $(H_1,
U_1), \dots, (H_n, U_n) \in K$ such that $\mu \in \Cc(H_i)$ for each $i$, and
$\bigcup^n_{i=1} U(\mu) = \TT$. Choose a partition of unity $\{f_1, \dots, f_n\} \in
C(\TT)$ subordinate to the $U_i$. Let $I_A$ be the ideal of $C^*(E)$ generated by $\{p_v
: v \in A\}$. Then each $f_i$ belongs to the image of $p_{r(\mu)} J_{(H_i, U_i)}
p_{r(\mu)}$ under the isomorphism of Lemma~\ref{lem:corner}, and so $1 = \sum_i f_i$
belongs to the image of $\sum^n_{i=1} p_{r(\mu)} J_{H_i, U_i} p_{r(\mu)}$. Since each
$(H_i, U_i) \preceq (H'', K'')$, it follows that $1$ belongs to the image of $J_{(H'',
K'')}$. But the preimage of $1$ is $p_{r(\mu)} + I_A$, and we deduce that $p_{r(\mu)} \in
J_{(H'', K'')}$. The final statement of Lemma~\ref{lem:containment} therefore implies
that $v \in H''$. So $A \cup B \subseteq H''$, and since $H''$ is saturated and
hereditary, it follows that $H^K \subseteq H''$. Now suppose that $\mu \in \Cc(H^K) \cap
\Cc(H'')$. For each $z \in U^K(\mu)$, there exists $(H,U) \in K$ such that $\mu \in \CH$ and
$z \in U(\mu)$. Since $(H,U) \preceq (H'', U'')$ and $\mu \in \Cc(H'') \cap \Cc(H)$, we
deduce that $z \in U''(\mu)$. So $U^K(\mu) \subseteq U''(\mu)$. So we have $(H^K, U^K)
\preceq (H'', U'')$ as required.
\end{proof}

\end{document}